\documentclass[10pt,a4paper]{article}
\topmargin  =25.mm      
\oddsidemargin  =0.mm       
\evensidemargin =0.mm       
\headheight = -20.mm  \headsep    =-5.mm \textheight =245.mm
\textwidth=160.mm

\usepackage{amssymb,amsmath}
\usepackage{amsthm}
\usepackage{bbm}
\usepackage{stmaryrd}
\usepackage{hyperref}

\input xy
\xyoption{all} 



%

\numberwithin{equation}{section}

\newtheorem{theorem}{Theorem}[section]

\newtheorem{lemma}[theorem]{Lemma}
\newtheorem{proposition}[theorem]{Proposition}

\theoremstyle{definition}
\newtheorem{definition}[theorem]{Definition}
\newtheorem{remark}[theorem]{Remark}
\newtheorem{example}[theorem]{Example}

\newcommand{\rmd}{\textnormal{d}}

\DeclareMathOperator{\Vect}{Vect}

\DeclareMathOperator{\ad}{ad}

\font\black=cmbx10 \font\sblack=cmbx7 \font\ssblack=cmbx5 \font\blackital=cmmib10  \skewchar\blackital='177
\font\sblackital=cmmib7 \skewchar\sblackital='177 \font\ssblackital=cmmib5 \skewchar\ssblackital='177
\font\sanss=cmss10 \font\ssanss=cmss8 
\font\sssanss=cmss8 scaled 600 \font\blackboard=msbm10 \font\sblackboard=msbm7 \font\ssblackboard=msbm5
\font\caligr=eusm10 \font\scaligr=eusm7 \font\sscaligr=eusm5  \font\fraktur=eufm10
\font\sfraktur=eufm7 \font\ssfraktur=eufm5 
\font\bsymb=cmsy10 scaled\magstep2
\def\all#1{\setbox0=\hbox{\lower1.5pt\hbox{\bsymb
       \char"38}}\setbox1=\hbox{$_{#1}$} \box0\lower2pt\box1\;}
\def\exi#1{\setbox0=\hbox{\lower1.5pt\hbox{\bsymb \char"39}}
       \setbox1=\hbox{$_{#1}$} \box0\lower2pt\box1\;}

\newfam\bifam
\textfont\bifam=\blackital \scriptfont\bifam=\sblackital \scriptscriptfont\bifam=\ssblackital

\newfam\blfam
\textfont\blfam=\black \scriptfont\blfam=\sblack \scriptscriptfont\blfam=\ssblack

\newfam\bbfam
\textfont\bbfam=\blackboard \scriptfont\bbfam=\sblackboard \scriptscriptfont\bbfam=\ssblackboard

\newfam\ssfam
\textfont\ssfam=\sanss \scriptfont\ssfam=\ssanss \scriptscriptfont\ssfam=\sssanss
\def\sss#1{{\fam\ssfam\relax#1}}

\newfam\clfam
\textfont\clfam=\caligr \scriptfont\clfam=\scaligr \scriptscriptfont\clfam=\sscaligr

\newfam\frfam
\textfont\frfam=\fraktur \scriptfont\frfam=\sfraktur \scriptscriptfont\frfam=\ssfraktur

\def\hpb#1{\setbox0=\hbox{${#1}$}
    \copy0 \kern-\wd0 \kern.2pt \box0}
\def\vpb#1{\setbox0=\hbox{${#1}$}
    \copy0 \kern-\wd0 \raise.08pt \box0}

\def\pmb#1{\setbox0\hbox{${#1}$} \copy0 \kern-\wd0 \kern.2pt \box0}
\def\pmbb#1{\setbox0\hbox{${#1}$} \copy0 \kern-\wd0
      \kern.2pt \copy0 \kern-\wd0 \kern.2pt \box0}
\def\pmbbb#1{\setbox0\hbox{${#1}$} \copy0 \kern-\wd0
      \kern.2pt \copy0 \kern-\wd0 \kern.2pt
    \copy0 \kern-\wd0 \kern.2pt \box0}
\def\pmxb#1{\setbox0\hbox{${#1}$} \copy0 \kern-\wd0
      \kern.2pt \copy0 \kern-\wd0 \kern.2pt
      \copy0 \kern-\wd0 \kern.2pt \copy0 \kern-\wd0 \kern.2pt \box0}
\def\pmxbb#1{\setbox0\hbox{${#1}$} \copy0 \kern-\wd0 \kern.2pt
      \copy0 \kern-\wd0 \kern.2pt
      \copy0 \kern-\wd0 \kern.2pt \copy0 \kern-\wd0 \kern.2pt
      \copy0 \kern-\wd0 \kern.2pt \box0}


\mathchardef\za="710B  
\mathchardef\zb="710C  
\mathchardef\zg="710D  
\mathchardef\zd="710E  
\mathchardef\zve="710F 
\mathchardef\zz="7110  
\mathchardef\zh="7111  
\mathchardef\zvy="7112 
\mathchardef\zi="7113  
\mathchardef\zk="7114  
\mathchardef\zl="7115  
\mathchardef\zm="7116  
\mathchardef\zn="7117  
\mathchardef\zx="7118  
\mathchardef\zp="7119  
\mathchardef\zr="711A  
\mathchardef\zs="711B  
\mathchardef\zt="711C  
\mathchardef\zu="711D  
\mathchardef\zvf="711E 
\mathchardef\zq="711F  
\mathchardef\zc="7120  
\mathchardef\zw="7121  
\mathchardef\ze="7122  
\mathchardef\zy="7123  
\mathchardef\zf="7124  
\mathchardef\zvr="7125 
\mathchardef\zvs="7126 
\mathchardef\zf="7127  
\mathchardef\zG="7000  
\mathchardef\zD="7001  
\mathchardef\zY="7002  
\mathchardef\zL="7003  
\mathchardef\zX="7004  
\mathchardef\zP="7005  
\mathchardef\zS="7006  
\mathchardef\zU="7007  
\mathchardef\zF="7008  
\mathchardef\zW="700A  
\mathchardef\zC="7009  

\newcommand{\be}{\begin{equation}}
\newcommand{\ee}{\end{equation}}

\newcommand{\bea}{\begin{eqnarray}}
\newcommand{\eea}{\end{eqnarray}}
\def\*{{\textstyle *}}

\newcommand{\s}{{\textstyle *}}







\def\Sec{\sss{Sec}}



\def\sT{{\sss T}}

\def\st{{\sss t}}



\def\s*{{\scriptstyle *}}


\newcommand{\beas}{\begin{eqnarray*}}
\newcommand{\eeas}{\end{eqnarray*}}

\def\half{\frac{1}{2}}

\begin{document}
\bibliographystyle{plain}

\author{
        Andrew James Bruce\\
        {\it Institute of Mathematics}\\
                {\it Polish Academy of Sciences}
                }

\date{\today}
\title{Killing sections and sigma models\\ with Lie algebroid targets\thanks{Research partially funded  by the  Polish National Science Centre grant under the contract number DEC-2012/06/A/ST1/00256.  }}
\maketitle

\begin{abstract}
\noindent We define and examine the notion of a Killing section of a Riemannian Lie algebroid as a natural generalisation of a Killing vector field. We show that the various expression for a vector field to be Killing naturally generalise to the setting of Lie algebroids. As an application we examine the internal symmetries of  a class of sigma models for which the target space is a Riemannian Lie algebroid. Critical points of these sigma models are interpreted as generalised harmonic maps.
\end{abstract}

\begin{small}
\noindent \textbf{MSC (2010)}: 53C15, 53C07, 53D17, 53C22, 53Z05. \smallskip

\noindent \textbf{Keywords}: Lie algebroids, Riemannian geometry, Killing symmetries, sigma models, harmonic maps.
\end{small}

\section{Introduction}\label{sec:intro}
Lie groupoids and Lie algebroids are fundamental concepts in differential geometry.  Lie groupoids provide a unifying framework to discuss diverse topics in modern differential geometry including the theory of group actions, foliations, Poisson geometry, orbifolds, principal bundles, connection theory and so on.  The infinitesimal counterpart to Lie groupoids are Lie algebroids; one should keep in mind the relation between Lie groups and Lie algebras. For the general theory of Lie groupoids and Lie algebroids the reader should consult \cite{Mackenzie:2005}.

Recall the standard notion of a Lie algebroid as a vector bundle $E \rightarrow M$ equipped with a Lie bracket on the sections $[\bullet, \bullet]: \Sec(E) \times \Sec(E) \rightarrow \Sec(E)$ together with an anchor $\rho: \Sec(E) \rightarrow \Vect(M)$ that satisfy the Leibniz rule
\begin{equation*}
\nonumber [u,fv] = \rho(u)[f] \: v  +   f [u,v],
\end{equation*}
\noindent for all $u,v \in \Sec(E)$ and $f \in C^{\infty}(M)$. The Leibniz rule implies that the anchor is actually a Lie algebra morphism: $\rho\left([u,v]\right) = [\rho(u), \rho(v)]$. If we pick some local basis for the sections $(s_{a})$, then the structure functions of a Lie algebroid are defined by
\begin{align}
&[s_{a} , s_{b}] = Q_{ab}^{c}(x)s_{c},&
& \rho(s_{a})= Q_{a}^{A}(x)\frac{\partial}{\partial x^{A}},&
\end{align}
\noindent and satisfy
\begin{align*}
&Q_{ab}^{c} + Q_{ba}^{c} =0,&
& Q_{a}^{A}\frac{\partial Q_{b}^{B}}{\partial x^{A}} - Q_{b}^{A}\frac{\partial Q_{a}^{B}}{\partial x^{A}}=0,&
& \sum_{\textnormal{cyclic}}\left( Q_{a}^{A}\frac{\partial Q_{bc}^{d}}{\partial x^{A}} - Q_{ae}^{d}Q_{bc}^{e} \right)=0.&
\end{align*}

The mantra of Lie algebroids is: \emph{what ever you can do on the tangent bundle of a manifold you can do on a Lie algebroid}.  Indeed, the tangent bundle of a manifold is one of the two fundamental examples of a Lie algebroid, the second example is a Lie algebra. The theory of Lagrangian mechanics on Lie algebroids is now well-developed. The Riemannian geometry of Lie algebroids is less explored than the applications of Lie algebroids to mechanics. The notion of a Riemannian structure on a Lie algebroid is just that of a metric on the underlying vector bundle; there is no compatibility condition required. However, the Lie algebroid structure allows for a more interesting theory that parallels the classical theory of Riemannian manifolds closely.  Interestingly, all the main constructions from Riemannian geometry pass to Lie algebroids, in particular a clear notion of torsion as well as  Levi--Civita connections.

We direct the interested reader to the following papers \cite{Anastasiei:2010,Boucetta:2011,Cortes:2004,Grabowska:2006,Jozwikowski:2012}, all of which discuss Riemannian geometry on Lie algebroids to varying extents. The notion of a Riemannian metric on a Lie groupoid is rather delicate  and various notions exists, one has to think about the compatibility of the groupoid and metric structures. We will not touch upon Riemannian Lie groupoids and direct the interested reader to the recent publication \cite{Hoyo:2015} and references therein.

One notion that appears to be missing, or at least not properly discussed, is that of a \emph{Killing section} of a Lie algebroid equipped with a Riemannian metric. Such a section would be the natural generalisation of a Killing vector field in standard Riemannian geometry. In this paper we present the notion of a Killing section, examine the various ways to express this and some direct consequences.

We remark that Killing vectors and Killing tensors have been fundamental in the development of general relativity. As far as we  are aware, Anastasiei and G\^{\i}r\c{t}u \cite{Anastasiei:2014} were the first to introduce the Einstein field equations (without cosmological constant) on a Riemannian Lie algebroid. However, it is unclear if there are any deep physical applications of this direct generalisation. The notion of a Lie algebroid has of course  been applied to the theory of gravity, just not in such a direct way;   see for example \cite{Strobl:2004,Vacaru:2012}.

In this paper we also define a class of sigma models for which the target manifold is a Riemannian Lie algebroid and show how Killing sections describe their internal symmetries. As far as we know, these models have not been studied before. The equations of motion for these models follows from the work of  Mart\'{\i}nez \cite{Martinez:2005} and the critical points can naturally be interpreted as generalised harmonic maps.  We comment that (non-linear) sigma models generally represent a very rich class of field theories that have found applications in high energy physics, condensed matter physics and string theory. From a mathematical perspective sigma models provide a deep link between quantum field theory and differential geometry. It is  natural and hopefully useful to study Lie algebroid versions of sigma models.

\smallskip

\noindent\textbf{Arrangement:} In section \ref{sec:metrics and coflows} we review the idea of a Riemannian metric and (co)geodesics in the setting of Lie algebroids. There are no new results on this section.  In section \ref{sec:killing} we proceed to the notion of a Killing section of a Lie algebroid equipped with a Riemannian metric  and explore the basic properties of such sections.  In section \ref{sec:SigmaModel} we briefly  apply the constructions in this note to a natural generalisation of standard sigma models in which the Riemannian structure on the target manifold gets replaced with a Riemannian structure on a Lie algebroid. We end this paper with a few closing comments in section \ref{sec:conclusion}.
\section{Riemannian metrics and cogeodesic flows on Lie algebroids}\label{sec:metrics and coflows}
In this section we will briefly recall some basic notions related to Riemannian structures on Lie algebroids. Nothing in this section is new.  For details the reader is advised to consult the literature listed in the introduction.\smallskip

\noindent \textbf{Metrics and connections:} Recall that a \emph{Riemannian metric} on a Lie algebroid $E \rightarrow M$ is a smooth assignment of an inner product to each of the fibres:
\begin{equation*}
\langle \bullet | \bullet \rangle : \Sec(E) \times \Sec(E) \rightarrow C^{\infty}(M).
\end{equation*}

In a local trivialisation we have $u = u^{a}(x)s_{a}$ and $v = v^{a}(x)s_{a} \in \Sec(E)$ and the Riemannian metric is given by
\begin{equation}
\langle u | v\rangle = G_{ab}(x)u^{a} v^{b}.
\end{equation}

A Lie algebroid equipped with a Riemannian metric shall be referred to as a \emph{Riemannian Lie algebroid} and simply denoted by $(E,G)$ when the Lie algebroid structure is clear. Any Riemannian metric on a Lie algebroid defines a quadratic function on $E$, which in local coordinates $(x^{A}, y^{a})$ is $G = y^{a}y^{b}G_{ba}$. Following common practice we will also refer to $G \in C^{\infty}(M)$ as the Riemannian metric.

One should note that the definition of a Riemannian metric on a Lie algebroid is just the definition of a Riemannian metric on the underlying vector bundle structure. Thus, as all vector bundles admit Riemannian metrics \emph{all} Lie algebroids admit Riemannian metrics.  The two extreme examples here are Riemannian metrics on a manifold $M$, which are really metrics on $\sT M$, and  non-degenerate scalar products on Lie algebras; for example the Killing from on a semisimple Lie algebra.  The general situation should be though of as a mixture of these two extremes.

 \begin{remark}
 The notion of a pseudo-Riemannian metric as found in general relativity also has the obvious generalisation to Lie algebroids. All the constructions in this note do not depend critically on the signature of the metric.
\end{remark}
Let us briefly recall the notion of a \emph{Lie algebroid connection} (we will refer to this simply as a connection) as a map
\begin{equation}
\nabla : \Sec(E)\times \Sec(E) \rightarrow \Sec(E),
\end{equation}
\noindent that satisfies the following conditions
\begin{align*}
& \nabla_{fu}v = f \nabla_{u}v,&
&\nabla_{u}(fv) = f \nabla_{u}v + \rho(u)[f]\: v,&
\end{align*}
\noindent for all $u, v \in \Sec(E)$ and $f \in C^{\infty}(M)$. In a chosen local bases the \emph{Christoffel symbols} are defined by $\nabla_{s_{a}}s_{b} = \Gamma_{ab}^{c}s_{c}$.  The \emph{curvature} and \emph{torsion} of a connection as defined as
\begin{align}\label{eqn:curvature and torsion}
&R(u,v)w  := \left [\nabla_{u}, \nabla_{v} \right]w - \nabla_{[u,v]}w,& &\textnormal{and}& &T(u,v) := \nabla_{u}v - \nabla_{v}u - [u,v].&
\end{align}
\noindent Naturally a connection with vanishing curvature is referred to as a \emph{flat connection} and a connection with vanishing torsion is said to be a \emph{torsionless connection}. The vanishing of the torsion of a connection  can be viewed as a compatibility condition between the connection and the Lie algebroid structure viz $[u,v] = \nabla_{u}v - \nabla_{v}u$.

Let us fix some metric on the Lie algebroid, then a  connection is said to be \emph{metric compatible} if the following holds
\begin{equation}\label{eqn:metric compat}
\rho(u)\langle v|w \rangle = \langle\nabla_{u} v|w \rangle + \langle v| \nabla_{u}w \rangle.
\end{equation}
Following the classical results we have:
\begin{theorem}\textnormal{(The Fundamental Theorem of Riemannian Lie Algebroids)}
There is a unique connection on a Riemannian Lie algebroid $(E, G)$ characterised by the two properties that it has vanishing torsion and be metric compatible. Such  connections are known as Levi--Civita connections.
\end{theorem}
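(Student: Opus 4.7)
The plan is to adapt the classical Koszul formula derivation, with the anchor $\rho$ playing the role that the direct action $X[f]$ of a vector field plays in the Riemannian case.

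For \emph{uniqueness}, suppose $\nabla$ is torsion-free and metric compatible. I would write out the metric compatibility identity (\ref{eqn:metric compat}) for the three cyclic triples $(u,v,w)$, $(v,w,u)$, $(w,u,v)$, form the combination (first) $+$ (second) $-$ (third), and use the torsion-free identity $\nabla_a b - \nabla_b a = [a,b]$ to cancel the four covariant derivatives other than $\nabla_u v$. This should yield the Lie algebroid Koszul formula
\begin{align*}
2\langle \nabla_u v \,|\, w\rangle &= \rho(u)\langle v|w\rangle + \rho(v)\langle w|u\rangle - \rho(w)\langle u|v\rangle \\
&\quad + \langle [u,v] \,|\, w\rangle - \langle [v,w] \,|\, u\rangle - \langle [u,w] \,|\, v\rangle.
\end{align*}
Fibrewise non-degeneracy of $G$ then determines $\nabla_u v$ uniquely.

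For \emph{existence}, I would turn this equation around: denoting the right-hand side by $F_{u,v}(w)$, I would first verify that $F_{u,v}$ is $C^{\infty}(M)$-linear in $w$, so that non-degeneracy of $G$ produces a unique $\nabla_u v \in \Sec(E)$ with $2\langle \nabla_u v | w\rangle = F_{u,v}(w)$. The remaining verifications are then: (i) tensoriality $\nabla_{fu}v = f\nabla_u v$; (ii) the anchor Leibniz rule $\nabla_u(fv) = f\nabla_u v + \rho(u)[f]\, v$; (iii) vanishing torsion; and (iv) metric compatibility. Each is a direct computation from the Koszul formula, using that $\rho(u)$ is a derivation on $C^{\infty}(M)$ together with the Lie algebroid Leibniz rule $[u,fv] = \rho(u)[f]\, v + f[u,v]$.

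The main obstacle will be bookkeeping in the $C^{\infty}(M)$-linearity check in $w$ and in step (ii): rescaling $w \mapsto fw$ or $v \mapsto fv$ in $F_{u,v}(w)$ produces several anchor terms $\rho(\cdot)[f]$ from the three metric-derivative terms and the three bracket terms, and one must see that they cancel completely in the $w$-check and combine to produce exactly a single $\rho(u)[f]\, v$ in the $v$-Leibniz check. Neither the Jacobi identity for the bracket nor the morphism property $\rho([u,v]) = [\rho(u),\rho(v)]$ should be needed at any step of this theorem; they would first enter the picture through the Bianchi-type identities for the curvature and torsion tensors defined in (\ref{eqn:curvature and torsion}).
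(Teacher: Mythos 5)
Your proposal is correct and is essentially the paper's own argument: the paper proves the theorem by the "standard arguments", recording exactly the Koszul formula you derive (your bracket terms agree with the paper's after using symmetry of $\langle\,\bullet\,|\,\bullet\,\rangle$ and antisymmetry of the bracket), with uniqueness from non-degeneracy and existence by reading the formula backwards. Your side remarks also check out — the $C^{\infty}(M)$-linearity in $w$ and the Leibniz rule in $v$ follow from the algebroid Leibniz rule alone, and neither the Jacobi identity nor $\rho([u,v])=[\rho(u),\rho(v)]$ is needed.
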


The metric compatibility (\ref{eqn:metric compat}) and the vanishing of the torsion (\ref{eqn:curvature and torsion}) imply that the Levi--Civita connection for a given Riemannian metric is uniquely define by the \emph{Koszul formula} (see for example \cite{Anastasiei:2014})  
\begin{eqnarray*}
2 \langle \nabla_{u}v | w \rangle &=&  \rho(u)\langle v|w\rangle + \rho(v)\langle w|u\rangle - \rho(w)\langle u|v\rangle\\
\nonumber & -& \langle u|[v,w]\rangle  + \langle v|[w,u]\rangle + \langle w|[u,v]\rangle,
\end{eqnarray*}
\noindent for all $u, v$ and $w \in \Sec(E)$. 

In local coordinates the  Christoffel symbols for a Levi--Civita connection are given  by
\begin{equation}
\Gamma_{bc}^{a} := \frac{1}{2} \: G^{ad}\left(Q_{c}^{A}\frac{\partial G_{bd}}{\partial x^{A}} + Q_{b}^{A}\frac{\partial G_{cd}}{\partial x^{A}}  - Q_{d}^{A}\frac{\partial G_{bc}}{\partial x^{A}} + Q_{db}^{e}G_{ec} + Q_{dc}^{e}G_{eb}  {-} Q_{bc}^{e}G_{ed}\right).
\end{equation}

The local expression for the Christoffel symbols for the Levi--Civita connection on a general Lie algebroid should remind one of the local expression for the standard case of the tangent bundle written in a non-coordinate bases. Indeed, the proof of the fundamental theorem  follows via the standard arguments and is well-known.
\medskip

We state that  will  almost exclusively  be dealing with Levi--Civita connections in this note.

\smallskip

\noindent \textbf{Hamiltonians and cogeodesic flows:} As we are dealing with Lie algebroids the dual bundle $E^{*}$ comes with the structure of linear Poisson structure given in local coordinates $(x^{A}, \pi_{a})$ as
\begin{equation}
\{F,H\}_{E} = Q_{a}^{A}\left( \frac{\partial F}{\partial \pi_{a}} \frac{\partial H}{\partial x^{A}} - \frac{\partial F}{\partial x^{A}} \frac{\partial H}{\partial \pi_{a}} \right) - Q_{ba}^{c}\pi_{c} \frac{\partial F}{\partial \pi_{a}}\frac{\partial H}{\partial \pi_{b}}.
\end{equation}
\noindent for any $F$ and $H \in C^{\infty}(E^{*})$. In the above the pair $(Q_{a}^{A}(x) , Q_{ab}^{c}(x))$ are the structure functions of the Lie algebroid $E\rightarrow M$.

We will consider the Hamiltonian system on $E^{*}$, with the following Hamiltonian
\begin{equation}
\mathcal{H}(x, \pi) =\frac{1}{2}G^{ab}(x)\pi_{b}\pi_{a} \in C^{\infty}(E^{*}),
\end{equation}
\noindent where $G^{ac}G_{cb} = G_{bc}G^{ca} = \delta_{b}^{a}$ defines the `inverse structure' which we informally refer to as the \emph{energy}.

\begin{definition}
Let us fix some Riemannian Lie algebroid $(E, G)$. The Hamiltonian  flow on $E^{*}$ generated by $\mathcal{H}$ will be referred to as the \emph{cogeodesic  flow} of the Riemannian Lie algebroid in question. A curve $c: [0,1] \rightarrow E^{*}$ is called a \emph{cogeodesic} if and only if $X_{\mathcal{H}} \circ c = \st c$, where $X_{\mathcal{H}}$ is the Hamiltonian vector field associated with the energy and $\st c$ is the tangent lift of the curve $c$.
\end{definition}
Using the local expressions the cogeodesic flow is described by
\begin{align}\label{eqn:cogeodesic equations}
&\dot{x}^{A} = G^{ab}Q_{a}^{A}\pi_{b},& &\dot{\pi}_{a} = \left(  G^{bd}Q_{da}^{c} - \frac{1}{2}Q_{a}^{A}\frac{\partial G^{bc}}{\partial x^{A}}\right)\pi_{c}\pi_{b},&
\end{align}

\noindent where the `dot' has the meaning of time derivative (i.e. derivative with respect to the affine parameter used to describe the cogeodesics).

By using the metric to define the isomorphisms $E \simeq E^{*}$, i.e. $\pi_{b} = y^{a}G_{ab}$, where $(x^{A}, y^{a})$ are coordinates on $E$ the cogeodesic flow can be shown to be equivalent to the \emph{generalised geodesic equations}
\begin{align}\label{eqn:geodesic equations}
&\dot{x}^{A} =  y^{a}Q_{a}^{A},& &\dot{y}^{a} + \Gamma_{bc}^{a}y^{c}y^{b}=0.&
\end{align}

Curves $\gamma: [0,1] \rightarrow E$ that satisfy the generalised geodesic equations are referred to as \emph{geodesics}. From (\ref{eqn:geodesic equations}), we see that the curve $\gamma$ is \emph{admissible}; that is the virtual velocity $y$ is related to the actual velocity $\dot{x}$ via the anchor. Moreover the existence and uniqueness of geodesics given a starting point $(x_{0}, y_{0})$ is clear.  If the initial point is such that $y^{a}_{0}Q_{a}^{A}(x_{0})=0$, then the geodesic lies in $E_{x_{0}}$ and is said to be a  \emph{vertical geodesic}.      Because of the non-degeneracy of the Riemannian metric cogeodesics and geodesics are equivalent, but we prefer to make the distinction.

\section{Killing sections and symmetries}\label{sec:killing}
We now turn our attention to the concept of a Killing section. As all the statements in this section follow via direct computation in local coordinates rather than any clever arguments we omit detailed proofs. Indeed many of the proofs follows the standard proofs found in any textbook on Riemannian geometry.

\smallskip
\noindent \textbf{Killing sections and the Killing equation:}
 First we  recall the notion of the tangent lift of a section \cite{Grabowska:2006,Grabowski:1999}. Given a section $u = u^{a}s_{a}$ one can lift it to a vector field on $E$ as
\begin{equation}\label{eqn;lift}
\rmd_{\sT}(u) := u^{a}Q_{a}^{A}\frac{\partial}{\partial x^{A}} + \left(y^{a}Q_{a}^{A}\frac{\partial u^{c}}{\partial x^{A}} {-} y^{a}u^{b}Q_{ba}^{c}  \right)\frac{\partial}{\partial y^{c}} \in \Vect(E).
\end{equation}
One should note that the (infinitesimal) action of $\rmd_{\sT}(u)$ preserves the Lie algebroid structure.  We can then use this lift to define the Lie derivative of a Riemannian metric, understood as a quadratic function on $E$, along a section. In  particular we are led naturally to the following definition:
\begin{definition}\label{def:killing1a}
A section $u \in \Sec(E)$ of a Riemannian Lie algebroid $(E,G)$ is a \emph{Killing section} if and only if
\begin{equation}\label{eqn:killing1a}
\mathcal{L}_{u}G := \rmd_{\sT}(u)[G] =0.
\end{equation}
\end{definition}

\begin{lemma}\label{lem:local killing}
In local coordinates (\ref{eqn:killing1a}) given the following condition
\begin{equation}\label{eqn:killing2}
u^{a}Q_{a}^{A} \left( \frac{\partial G_{bc}}{\partial x^{A}}\right) + Q_{b}^{A}\left(\frac{\partial u^{d}}{\partial x^{A}}  \right)G_{dc} + Q_{c}^{A} \left( \frac{\partial u^{d}}{\partial x^{A}}\right) G_{db} - u^{a}Q_{ab}^{d}G_{dc} - u^{a}Q_{ac}^{d}G_{db}=0.
\end{equation}
\end{lemma}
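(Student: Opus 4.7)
The plan is a direct local coordinate calculation. Write the Riemannian metric as the quadratic function $G = G_{bc}(x)\,y^b y^c$ on the total space of $E$ (with $G_{bc} = G_{cb}$), substitute the explicit lift formula (\ref{eqn;lift}) into the Killing condition $\rmd_{\sT}(u)[G]=0$, and rearrange the resulting quadratic form in the fibre coordinates.

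First, I would apply $\rmd_{\sT}(u)$ to $G$ term by term. The horizontal piece $u^a Q_a^A \partial/\partial x^A$ produces $u^a Q_a^A (\partial G_{bc}/\partial x^A) y^b y^c$. For the vertical piece, the symmetry of $G_{bc}$ gives $\partial G/\partial y^c = 2 G_{cd}\, y^d$, so applying the $\partial/\partial y^c$ coefficient from (\ref{eqn;lift}) yields two contributions: $2 y^a Q_a^A (\partial u^c/\partial x^A) G_{cd}\, y^d$ and $-2 y^a u^b Q_{ba}^c G_{cd}\, y^d$. After renaming dummy indices so that the overall monomial takes the form $y^b y^c$ matching (\ref{eqn:killing2}), the three pieces combine to
\begin{equation*}
\rmd_{\sT}(u)[G] = \left[\, u^a Q_a^A \frac{\partial G_{bc}}{\partial x^A} + 2\, Q_b^A \frac{\partial u^d}{\partial x^A}\, G_{dc} - 2\, u^a Q_{ab}^d\, G_{dc}\,\right] y^b y^c.
\end{equation*}

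Next, I would exploit the symmetry of the monomial $y^b y^c$: any quantity of the shape $2 A_{bc}\, y^b y^c$ equals $(A_{bc} + A_{cb}) y^b y^c$. Applying this to each of the two terms carrying the factor $2$ replaces them with their symmetric pairs, yielding exactly the five-term expression on the left-hand side of (\ref{eqn:killing2}). Since $\rmd_{\sT}(u)[G]$ is a homogeneous quadratic function in $y$, it vanishes identically on $E$ if and only if its symmetric coefficient tensor vanishes pointwise, which is precisely the asserted identity.

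The main obstacle is purely bookkeeping: one must relabel dummy indices consistently and symmetrise correctly so that the signs and index positions of the $Q_{ab}^d$ terms agree with the displayed formula. In particular, care is needed because the index of $y$ in the vertical coefficient $y^a u^b Q_{ba}^c$ of (\ref{eqn;lift}) is contracted with the \emph{second} lower index of the structure function, so the relabelling must respect this before the symmetrisation step. Once this is sorted out, the computation is entirely mechanical, in keeping with the author's remark that detailed proofs in this section are omitted.
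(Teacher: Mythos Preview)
Your proposal is correct and follows precisely the direct local-coordinate computation that the paper intends; the author explicitly omits detailed proofs in this section, and your argument is exactly the routine calculation from (\ref{eqn;lift}) that produces (\ref{eqn:killing2}) upon symmetrising in the fibre indices. There is nothing to add.
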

For the case of $E = \sT M$ we have that $Q_{a}^{b} = \delta_{a}^{b}$ and $Q_{ab}^{c}=0$ and so (\ref{eqn:killing2})  clearly reduces to  the standard  Lie derivative along a vector field of the  Riemannian metric vanishing.

Following standard ideas about Lie algebroids, we identify sections of $E$ with linear functions on $E^{*}$, or in the graded language homogeneous functions of weight one. That is, in any local trivialisation,  we have the association
\begin{equation}
u^{a}(x)s_{a} \leftrightsquigarrow u^{a}(x)\pi_{a}.
\end{equation}

\noindent \textbf{Warning} In the following we will mean by $u$ a section of $E$ understood as a section or a linear function. The context should make the distinction clear. Moreover we will use the identification of the  Poisson bracket on $E^{*}$ restricted to weight one functions with the Lie bracket on sections of $E$ throughout the rest of this note.
\begin{theorem}\label{thm:killing1}
 Let us fix some Riemannian Lie algebroid $(E, G)$. Then a section $u \in \Sec(E)$ is a Killing section if and only if
 \begin{equation}\label{eqn:killing1b}
 \{ u, \mathcal{H}\}_{E} =0.
 \end{equation}
\end{theorem}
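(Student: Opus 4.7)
The plan is to verify (\ref{eqn:killing1b}) by direct computation in local coordinates and match the result against the Killing equation (\ref{eqn:killing2}) of Lemma \ref{lem:local killing}. First, I would expand $\{u, \mathcal{H}\}_E$ using the given Poisson bracket on $E^*$ with $u = u^a(x)\pi_a$ and $\mathcal{H} = \tfrac{1}{2} G^{bc}(x) \pi_b \pi_c$. Three contributions appear: a term involving $\partial G^{bc}/\partial x^A$, a term involving $\partial u^d/\partial x^A$, and a term involving the structure functions $Q_{ab}^c$. All three are quadratic in the $\pi$'s, exactly matching the homogeneity of $G \in C^\infty(E)$.

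Next, I would translate the output into the $E$-picture via $\pi_a = G_{ab}(x) y^b$, turning $\{u, \mathcal{H}\}_E$ into a quadratic form in the $y$'s. Two identities do most of the work: differentiating $G^{ab}G_{bc} = \delta^a_c$ gives $(\partial G^{bc}/\partial x^A) G_{bf} G_{cg} = -\partial G_{fg}/\partial x^A$, and the antisymmetry $Q_{ab}^c = -Q_{ba}^c$ reshapes the structure-function term into the form appearing in (\ref{eqn:killing2}). Since $y^f y^g$ is symmetric in $f,g$, only the symmetric parts of the various coefficients survive.

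After collecting terms, the coefficient of $y^f y^g$ coincides, up to a harmless overall factor of $-\tfrac{1}{2}$, with the left-hand side of the Killing equation (\ref{eqn:killing2}). Because the fibre coordinates $y$ are arbitrary, $\{u,\mathcal{H}\}_E$ vanishes as a function on $E^*$ if and only if this coefficient vanishes identically on $M$, and by Definition \ref{def:killing1a} together with Lemma \ref{lem:local killing} this is precisely the statement that $u$ is a Killing section.

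The only genuine obstacle is bookkeeping: tracking signs, the inverse-metric derivative identity, and the symmetric/antisymmetric pieces of the structure functions $Q_{ab}^c$ under summation against $y^f y^g$. A conceptually cleaner sanity-check route, which I would use to guard against algebra errors, exploits the fact that the assignment $u \mapsto \ell_u := u^a \pi_a$ sends the tangent lift $\rmd_{\sT}(u) \in \Vect(E)$ to the Hamiltonian vector field of $\ell_u$ on $E^*$, while the metric-induced isomorphism $E \simeq E^*$ identifies $G \in C^\infty(E)$ with $2\mathcal{H} \in C^\infty(E^*)$. From that viewpoint the conditions (\ref{eqn:killing1a}) and (\ref{eqn:killing1b}) are literally the same equation transported across the musical isomorphism, and the theorem is immediate.
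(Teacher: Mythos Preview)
Your main argument is correct and is exactly the paper's approach: the paper's proof of Theorem \ref{thm:killing1} consists of the single sentence ``The theorem follows by explicit calculation in local coordinates using Lemma \ref{lem:local killing},'' and you have simply spelled out that calculation in appropriate detail.

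One caution about the ``conceptually cleaner sanity-check route'' in your last paragraph: the musical isomorphism $E\simeq E^{*}$ determined by $G$ does \emph{not} in general intertwine the tangent lift $\rmd_{\sT}(u)\in\Vect(E)$ with the Hamiltonian vector field $X_{\ell_{u}}\in\Vect(E^{*})$; pushing $\rmd_{\sT}(u)$ across picks up extra terms involving $\partial G_{ab}/\partial x^{A}$ that $X_{\ell_{u}}$ does not have. So the two conditions are not ``literally the same equation transported across the musical isomorphism''---their equivalence is precisely the content of the theorem and genuinely requires the local computation you outlined first. Since you offer this only as a heuristic check and not as the proof, the proposal as a whole is fine, but you should drop or amend that final paragraph.
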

\begin{proof}
The theorem follows by explicit calculation in local coordinates using Lemma \ref{lem:local killing}.

\end{proof}

\smallskip
\noindent \textbf{Statement}  Killing sections represent `generalised symmetries' of the Riemannian metric $G$, or equivalently they correspond to `conserved charges' along the cogeodesic flow generated by the energy $\mathcal{H}$. This is of course in complete agreement with the classical case.
\smallskip

\begin{theorem}\label{thm:killing2}
A section $u \in \Sec(E)$ of a Riemannian Lie algebroid $(E,G)$ is a  Killing section if and only if
\begin{equation}\label{eqn:killing3}
\langle \nabla_{v}u | w \rangle + \langle \nabla_{w}u |v \rangle =0,
\end{equation}
\noindent for all $v,w \in \Sec(E)$.
\end{theorem}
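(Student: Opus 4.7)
The plan is to derive the identity
\[
\langle \nabla_v u|w\rangle + \langle \nabla_w u|v\rangle \;=\; \rho(u)\langle v|w\rangle - \langle [u,v]|w\rangle - \langle v|[u,w]\rangle
\]
by combining the two defining properties of the Levi--Civita connection, and then to recognise that the right-hand side is precisely the invariant expression of $\mathcal{L}_u G$ acting on the pair $(v,w)$.

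First I would start from metric compatibility, $\rho(u)\langle v|w\rangle = \langle \nabla_u v|w\rangle + \langle v|\nabla_u w\rangle$, and then use the torsion-free identity $\nabla_u v = \nabla_v u + [u,v]$ (and similarly with $v,w$ interchanged) to substitute for $\nabla_u v$ and $\nabla_u w$ on the right. Collecting terms yields the displayed identity above. This is the Lie algebroid analogue of the familiar textbook manipulation in Riemannian geometry and requires no further ingredients beyond the fundamental theorem already quoted in the paper.

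Next I would check that the right-hand side of this identity coincides with the Lie derivative $\mathcal{L}_u G$ of the metric, viewed as a symmetric bilinear form on $\Sec(E)$, evaluated on $(v,w)$. This can be done either invariantly, by noting that $\rho(u)\langle v|w\rangle - \langle [u,v]|w\rangle - \langle v|[u,w]\rangle$ is manifestly $C^\infty(M)$-bilinear and symmetric in $v,w$ and therefore defines a section of $S^2 E^*$, or by a short local-coordinate computation that matches it with the expression in Lemma \ref{lem:local killing}; indeed, setting $v = v^a s_a$, $w = w^c s_c$ and using the local formulas for the bracket and the anchor recovers exactly equation (\ref{eqn:killing2}) contracted with $v^b w^c$, up to the symmetrisation implicit in the quadratic function $G = y^a y^b G_{ba}$.

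Finally, combining these two steps, $u$ is a Killing section in the sense of Definition \ref{def:killing1a} if and only if the RHS of the displayed identity vanishes for all $v,w \in \Sec(E)$, which by the identity is equivalent to (\ref{eqn:killing3}). The only real step that deserves care is the identification of $\rmd_\sT(u)[G]$ (a quadratic function on $E$) with the symmetric tensor $(v,w)\mapsto \rho(u)\langle v|w\rangle - \langle [u,v]|w\rangle - \langle v|[u,w]\rangle$; once this is accepted, as essentially recorded by Lemma \ref{lem:local killing}, the proof is a single line of Koszul-style rewriting and no substantial obstacle remains.
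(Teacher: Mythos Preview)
Your argument is correct. The paper's own proof is literally one sentence --- ``The theorem follows by explicit calculation in local coordinates using Lemma \ref{lem:local killing}'' --- meaning the author simply expands $\langle \nabla_{v}u|w\rangle + \langle \nabla_{w}u|v\rangle$ using the local Christoffel formula and verifies term-by-term that the result is $v^{b}w^{c}$ contracted with the expression in (\ref{eqn:killing2}).

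Your route is genuinely different and cleaner: rather than invoking the explicit Christoffel symbols at all, you use only the two \emph{defining} properties of the Levi--Civita connection (metric compatibility and zero torsion) to get the coordinate-free identity
\[
\langle \nabla_{v}u|w\rangle + \langle \nabla_{w}u|v\rangle \;=\; \rho(u)\langle v|w\rangle - \langle [u,v]|w\rangle - \langle v|[u,w]\rangle,
\]
and then identify the right-hand side with $(\mathcal{L}_{u}G)(v,w)$ via Lemma \ref{lem:local killing}. This buys you an invariant proof that makes the Riemannian-geometry analogy transparent and never needs the messy Christoffel formula; the paper's approach buys nothing extra but is perhaps what the author had in mind given that all proofs in Section \ref{sec:killing} are advertised as ``direct computation in local coordinates''. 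The only point worth flagging is that you should state explicitly that the identification of $\rmd_{\sT}(u)[G]$ with the tensor $(v,w)\mapsto \rho(u)\langle v|w\rangle - \langle [u,v]|w\rangle - \langle v|[u,w]\rangle$ is exactly what Lemma \ref{lem:local killing} encodes once one polarises the quadratic function $G=y^{a}y^{b}G_{ba}$; you already say this, but it is the one place a reader might pause.
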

\begin{proof}
The theorem follows by explicit calculation in local coordinates using Lemma \ref{lem:local killing}.

\end{proof}

For the case of $E = \sT M$ (\ref{eqn:killing3}) reduces to the standard Killing equation, $\nabla_{\mu}X_{\nu} +\nabla_{\nu}X_{\mu} =0 $, which is often taken as the \emph{definition} of a Killing vector. We will refer to (\ref{eqn:killing3}) the \emph{Lie algebroid Killing equation}.

As standard  the components of the curvature tensor  are given by  $R(s_{a}, s_{b})s_{c} := R_{a\:\: bc}^{\:\:d}s_{d}$ in some local basis of sections. It is not hard to show  by application of the Lie algebroid Killing equation and the symmetries of the curvature tensor that the following proposition holds:
\begin{proposition}\label{prop:finite vector space}
If a  section $u \in \Sec(E)$ of a Riemannian Lie algebroid $(E,G)$ is a Killing section then
\begin{equation*}
(\nabla_{b} \nabla_{c}u)^{d}G_{da} = - u^{e}R_{e\:\: ca}^{\:\:d}G_{db}.
\end{equation*}
\end{proposition}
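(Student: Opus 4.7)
The plan is to transport the classical derivation of this identity for Killing vector fields to the Lie algebroid setting; the extra bookkeeping consists only of the bracket terms that appear in the commutator of covariant derivatives.

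I would begin from the Lie algebroid Killing equation of Theorem \ref{thm:killing2}, written in components as
$$(\nabla_b u)^d G_{dc} + (\nabla_c u)^d G_{db} = 0,$$
and apply a third covariant derivative $\nabla_a$. Since the Levi--Civita connection is metric compatible by the Fundamental Theorem, the derivative passes through $G$ and one obtains
$$(\nabla_a \nabla_b u)^d G_{dc} + (\nabla_a \nabla_c u)^d G_{db} = 0.$$
Writing the three cyclic permutations of this identity in $(a,b,c)$ and taking the standard alternating combination isolates the desired term $(\nabla_b \nabla_c u)^d G_{da}$ up to antisymmetric expressions of the form $(\nabla_x \nabla_y u - \nabla_y \nabla_x u)^d G_{d\bullet}$.

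Next, I would rewrite each such antisymmetric expression via the definition of the curvature (\ref{eqn:curvature and torsion}),
$$\nabla_x \nabla_y u - \nabla_y \nabla_x u = R(x,y) u + \nabla_{[x,y]} u.$$
Because the torsion vanishes, $[s_a, s_b] = \nabla_{s_a} s_b - \nabla_{s_b} s_a$, so the extra $\nabla_{[\bullet,\bullet]}$ pieces reduce to further single covariant derivatives of $u$ which can be absorbed by re-applying the Killing equation. What is left is a cyclic sum of curvature terms of the shape
$$u^e \bigl( R_{e\:\: ca}^{\:\:d} G_{db} + R_{e\:\: ab}^{\:\:d} G_{dc} + R_{e\:\: bc}^{\:\:d} G_{da} \bigr),$$
which vanishes by the first Bianchi identity; on a Lie algebroid with a torsionless connection the identity $\sum_{\mathrm{cyc}} R(s_a, s_b) s_c = 0$ is proved word for word as in the classical case, using only the Jacobi identity of $[\bullet, \bullet]$ and the vanishing of the torsion. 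Solving for the term of interest yields the claimed formula.

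The main obstacle I anticipate is not conceptual but organisational: in contrast to the classical situation on $\sT M$, the commutator $[\nabla_a, \nabla_b]$ carries an extra $\nabla_{[s_a, s_b]}$ piece, and one must check that these extra contributions cancel cleanly through the alternating sum. I would therefore first establish the torsionless first Bianchi identity in its Lie algebroid form as a short auxiliary computation, and only then feed it into the cyclic manipulation above; once that is in place the remaining argument is precisely the textbook derivation of the corresponding identity for Killing vector fields on a Riemannian manifold.
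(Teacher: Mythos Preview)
Your proposal is correct and is exactly the approach the paper has in mind; the paper itself gives no argument beyond the sentence ``It is not hard to show by application of the Lie algebroid Killing equation and the symmetries of the curvature tensor,'' and what you describe is the standard cyclic-permutation argument behind that remark. One small simplification worth noting: if you interpret $\nabla_a\nabla_b u$ as the tensorial second covariant derivative (so that $\nabla_a$ also acts on the lower index $b$), then torsion-freeness of the Levi--Civita connection gives the Ricci identity $(\nabla_a\nabla_b-\nabla_b\nabla_a)u^d = R_{a\:\:bc}^{\:\:d}\,u^c$ with no residual $\nabla_{[s_a,s_b]}$ term, and the extra bookkeeping you flag as the main obstacle evaporates.
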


A direct consequence of the previous proposition is the following:
\begin{proposition}\label{prop:diff equations}
A Killing section is completely determined by its value $u^{a}$ and  the value of the anti-symmetric tensor $L_{ab} := (\nabla_{a}u)^{c}G_{cb}$ at any point $p \in M$. That is, given these values at $p \in M$, the values at $q \in M$ are determined by integration of the following differential equations
\begin{align*}
&v^{b}(\nabla_{a}u)^{c}G_{cb} = v^{b}L_{ba},\\
& v^{b} (\nabla_{b} \nabla_{c}u)^{d}G_{da} = - v^{b}u^{e}R_{e\:\: ca}^{\:\:d}G_{db},
\end{align*}
\noindent for any admissible curve with base curve connecting $p$ and $q$. Here $\dot{x}^{A}(t) = v^{a}(t)Q_{a}^{A}(x(t))$ where $x^{A}(t)$ is the local coordinate representation of the base curve.
\end{proposition}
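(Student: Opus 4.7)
The plan is to read the two displayed identities as the component equations of a first-order linear ODE system for the pair $(u^{a}, L_{ab})$ along the given admissible curve, and then invoke standard ODE existence and uniqueness, with the data at $p$ playing the role of initial conditions.

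First I would fix a Killing section $u$ and an admissible curve $\gamma$ whose base $x^{A}(t)$ connects $p$ to $q$, with virtual velocity $v^{a}(t)$ satisfying $\dot{x}^{A} = v^{a}Q_{a}^{A}$. Restricting the component functions $u^{a}$ and $L_{ab} = (\nabla_{a}u)^{c}G_{cb}$ to the base curve produces smooth functions of $t$. By Theorem \ref{thm:killing2} the tensor $L_{ab}$ is antisymmetric, so the pair $(u^{a}, L_{ab})$ encodes $u$ together with its first covariant derivatives at each point of the curve.

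The key observation is that, because the curve is admissible, both time derivatives of interest can be written as contractions of covariant derivatives on $E$ with $v^{a}$. Namely, from the definition of the Christoffel symbols one has
\[
\frac{D u^{c}}{dt} = v^{a}(\nabla_{a}u)^{c};
\]
contracting with $G_{cb}$ and using the definition of $L_{ab}$ together with its antisymmetry reproduces the first displayed identity, interpreted now as an ODE expressing $\dot{u}^{a}$ linearly in $(u^{a}, L_{ab})$ with coefficients built from the anchor, metric and Christoffel symbols. Similarly, metric compatibility yields $\frac{D L_{ca}}{dt} = v^{b}(\nabla_{b}\nabla_{c}u)^{d}G_{da}$, and Proposition \ref{prop:finite vector space} rewrites the right-hand side as $-v^{b}u^{e}R_{e\:\: ca}^{\:\:d}G_{db}$, which is exactly the second displayed identity. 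Together the two equations constitute a closed first-order linear ODE system $\dot{y}(t) = A(t)y(t)$ in the unknown $y(t) = (u^{a}(t), L_{ab}(t))$, whose coefficient matrix $A(t)$ is a smooth function of the anchor, structure functions, Christoffel symbols, curvature and metric evaluated along $x(t)$.

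The conclusion then follows from the standard existence and uniqueness theorem for linear ODEs: the solution on $[0,1]$ is uniquely determined by the initial value $y(0) = (u^{a}(p), L_{ab}(p))$, so evaluating at $t = 1$ gives $(u^{a}(q), L_{ab}(q))$. The main bookkeeping obstacle will be unpacking $D/dt$ along $\gamma$ into partial derivatives plus Christoffel correction terms and verifying that the two displayed expressions match the resulting system on the nose; the substantive analytic input, Proposition \ref{prop:finite vector space}, is already at hand.
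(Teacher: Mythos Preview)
Your proposal is correct and is exactly the argument the paper has in mind: the paper offers no explicit proof beyond declaring the proposition ``a direct consequence of the previous proposition,'' and the standard way to cash that out is precisely the first-order linear ODE argument you outline, with Proposition~\ref{prop:finite vector space} supplying the closure of the system and ODE uniqueness giving the conclusion. You have simply filled in the details the paper leaves implicit.
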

We then see that the maximum number of linearly independent  Killing sections is then determined by the dimension of the space of initial data $(u^{a}, L_{bc})$. Thus if the rank of the vector bundle $E$ is $n$, then the dimension of the space of initial data is $n(n+1)\slash 2$.

\smallskip

\noindent \textbf{Statement} From the definition of a Killing section it is clear that the set of Killing sections forms a vector space over the reals. Moreover, we know that this vector space is of finite dimension and is bounded by $n(n+1)\slash 2$. Following classical nomenclature, a Riemannian Lie algebroid is called \emph{maximally symmetric}  if it possesses the maximal number of linearly independent Killing sections.

\smallskip

\noindent \textbf{Geodesic sections:} Following classical notion of a geodesic vector field we arrive at the following definition:
\begin{definition}
A \emph{geodesic section} as a  section $v \in \Sec(E)$ that satisfies $\nabla_{v}v=0$.
\end{definition}
The nomenclature is apt as the generalised geodesic equations can be derived from $\nabla_{\gamma(t)}\gamma(t) =0$, upon insisting  that $\gamma$ being admissible.
\begin{proposition}
Provided $u$ is a Killing section and $v$ a geodesic section we have that
\begin{equation}
\rho(v)\langle u|v\rangle =  \langle\nabla_{v}u | v \rangle + \langle u | \nabla_{v} v \rangle =0.
\end{equation}
\end{proposition}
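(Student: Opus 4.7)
The statement is a direct consequence of metric compatibility together with the defining properties of Killing and geodesic sections, so the plan is essentially a two-line verification.

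First, I would establish the identity $\rho(v)\langle u|v\rangle = \langle\nabla_{v}u|v\rangle + \langle u|\nabla_{v}v\rangle$ by applying the metric-compatibility condition (\ref{eqn:metric compat}) of the Levi--Civita connection, specialised to the triple $(v,u,v)$ in place of $(u,v,w)$. This reproduces the left-hand side of the desired equation verbatim and requires nothing more than substitution.

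Next, I would handle the two terms on the right separately. For $\langle u|\nabla_{v}v\rangle$, the geodesic condition $\nabla_{v}v=0$ from the definition above kills this term outright. For $\langle\nabla_{v}u|v\rangle$, I would invoke the Lie algebroid Killing equation (\ref{eqn:killing3}) from Theorem \ref{thm:killing2}, evaluated at the pair $(v,w)=(v,v)$: this yields $2\langle\nabla_{v}u|v\rangle=0$, hence $\langle\nabla_{v}u|v\rangle=0$.

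Combining these two vanishings with the metric-compatibility identity gives the full chain of equalities in the proposition. There is no real obstacle here; the argument is a formal consequence of the three input facts (metric compatibility, the Killing equation evaluated on a coincident pair, and the definition of a geodesic section), and it parallels the classical Riemannian proof that $\langle u,v\rangle$ is constant along a geodesic $v$ whenever $u$ is Killing. Since Theorem \ref{thm:killing2} has already translated definition (\ref{def:killing1a}) into the symmetric bilinear form (\ref{eqn:killing3}), no appeal to local coordinates or to the Koszul formula is required.
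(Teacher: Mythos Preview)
Your proof is correct and follows exactly the same approach as the paper: the paper's proof simply observes that (\ref{eqn:killing3}) with $v=w$ gives $\langle\nabla_{v}u|v\rangle=0$, and then invokes metric compatibility (together with $\nabla_{v}v=0$) to conclude. You have merely spelled out the steps in slightly more detail.
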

\begin{proof}
Observe that (\ref{eqn:killing3}) implies that  $\langle \nabla_{v}u | v \rangle =0$ if $u \in \Sec(E)$ is a Killing section. Using this observation and the metric compatibility establishes the proposition.
\end{proof}
\smallskip

\noindent \textbf{Statement} The above equation can then be interpreted as saying that $\langle u|v\rangle$, which is the component  of $v$ in the direction of $u$, is constant along the integral curves of $\rho(v) \in \Vect(M)$. Similarly, one can directly show that if $u$ is both Killing and geodesic then $\langle u |u\rangle$ must be constant.

\newpage

\noindent \textbf{The Lie algebra of Killing sections:} We need the following lemma that was first proved in \cite{Grabowski:1999}:
\begin{lemma}\label{lem:lie algebra}
The tangent lift of a section of a Lie algebroid is a Lie algebra morphism between the Lie algebra of sections of $E$ and vector fields on $E$:
\begin{equation*}
\rmd_{\sT}([u,v]) = [\rmd_{\sT}(u), \rmd_{\sT}(v)].
\end{equation*}
\end{lemma}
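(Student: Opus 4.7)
The plan is to exploit the fact that a vector field on the total space $E$ is determined by its action on a generating set of smooth functions, and a convenient generating set is $\pi^{\ast} C^{\infty}(M)$ (where $\pi: E\to M$ is the projection) together with the fibrewise-linear functions $\ell_\alpha$ indexed by sections $\alpha \in \Sec(E^{\ast})$. Inspecting the coordinate formula (\ref{eqn;lift}) one reads off the two identities
\begin{equation*}
\rmd_{\sT}(u)[\pi^{\ast} f] = \pi^{\ast}(\rho(u)[f]), \qquad \rmd_{\sT}(u)[\ell_\alpha] = \ell_{\mathcal{L}_u \alpha},
\end{equation*}
where $\mathcal{L}_u \alpha \in \Sec(E^{\ast})$ is the Lie algebroid Lie derivative, characterised by $\langle \mathcal{L}_u\alpha , w\rangle = \rho(u)\langle \alpha, w\rangle - \langle\alpha, [u,w]\rangle$ for every $w\in\Sec(E)$. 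In this way $\rmd_{\sT}(u)$ is the unique derivation on $C^{\infty}(E)$ assembling the anchor action on $C^{\infty}(M)$ with the Lie derivative action on $\Sec(E^{\ast})$.

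With this description in hand the morphism property reduces to checking an equality of derivations on the two generating families. On pulled-back functions one immediately gets
\begin{equation*}
[\rmd_{\sT}(u),\rmd_{\sT}(v)][\pi^{\ast} f] = [\rho(u),\rho(v)][f] = \rho([u,v])[f] = \rmd_{\sT}([u,v])[\pi^{\ast} f],
\end{equation*}
using only the anchor axiom $\rho([u,v]) = [\rho(u),\rho(v)]$. On linear functions one obtains
\begin{equation*}
[\rmd_{\sT}(u),\rmd_{\sT}(v)][\ell_\alpha] = \ell_{[\mathcal{L}_u,\mathcal{L}_v]\alpha} = \ell_{\mathcal{L}_{[u,v]}\alpha} = \rmd_{\sT}([u,v])[\ell_\alpha],
\end{equation*}
the middle equality being the only non-trivial ingredient. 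That ingredient is established in the standard way: pair both sides against an arbitrary $w \in \Sec(E)$, expand using the defining formula for $\mathcal{L}$ and the Leibniz behaviour of $\rho$, and collapse the result by invoking the Jacobi identity for $[\bullet,\bullet]$ in the form $[v,[u,w]] - [u,[v,w]] = -[[u,v],w]$ together with the anchor morphism axiom. What survives is exactly $\rho([u,v])\langle\alpha,w\rangle - \langle\alpha,[[u,v],w]\rangle = \langle \mathcal{L}_{[u,v]}\alpha,w\rangle$.

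The main obstacle, should one insist on a brute-force coordinate proof rather than the conceptual route above, is the combinatorial overhead of tracking the $\partial/\partial y^{c}$ terms in the commutator $[\rmd_{\sT}(u),\rmd_{\sT}(v)]$: cross-differentiation produces a cubic tangle in $y$, $u$, $v$ and the structure functions $Q_{ab}^{c}$, which collapses to match the single $\rmd_{\sT}([u,v])$ term only after invoking the full Jacobi identity $\sum_{\mathrm{cyc}}\bigl(Q_a^{A}\,\partial Q_{bc}^{d}/\partial x^{A} - Q_{ae}^{d} Q_{bc}^{e}\bigr)=0$ in tandem with the anchor morphism identity. The conceptual approach above is attractive precisely because it packages all of that bookkeeping into the already-known Cartan calculus on $\Sec(E^{\ast})$.
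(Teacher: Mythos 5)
Your argument is correct, but it is not the route the paper takes: the paper offers no proof of this lemma at all --- it is imported from Grabowski--Urba\'{n}ski \cite{Grabowski:1999}, and the surrounding text makes clear that the intended justification, as for everything else in that section, is a direct computation in the local coordinates of (\ref{eqn;lift}). You instead characterise $\rmd_{\sT}(u)$ intrinsically as the unique derivation of $C^{\infty}(E)$ that acts by $\rho(u)$ on $\pi^{*}C^{\infty}(M)$ and by the Lie algebroid Lie derivative on the fibrewise-linear functions $\ell_{\alpha}$, and then verify the morphism identity separately on these two generating families. Both identities you read off from (\ref{eqn;lift}) do hold (for $\ell_{\alpha}=\alpha_{a}(x)y^{a}$ one checks $(\mathcal{L}_{u}\alpha)_{a}=u^{b}Q_{b}^{A}\partial_{A}\alpha_{a}+Q_{a}^{A}(\partial_{A}u^{c})\alpha_{c}-u^{b}Q_{ba}^{c}\alpha_{c}$, which matches the $y$-component of the lift), the identity $[\mathcal{L}_{u},\mathcal{L}_{v}]=\mathcal{L}_{[u,v]}$ on $\Sec(E^{*})$ follows exactly as you indicate from the Jacobi identity together with $\rho([u,v])=[\rho(u),\rho(v)]$, and a vector field on the total space of a vector bundle is indeed determined by its action on pulled-back and fibrewise-linear functions. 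What your route buys is precisely what you claim: the cubic coordinate bookkeeping, which would otherwise need the structure-function form of the Jacobi identity, is absorbed into the standard Cartan calculus on $\Sec(E^{*})$; the only small debt you leave implicit is that $\mathcal{L}_{u}\alpha$ is well defined, i.e.\ that $w\mapsto\rho(u)\langle\alpha,w\rangle-\langle\alpha,[u,w]\rangle$ is $C^{\infty}(M)$-linear in $w$, which is an immediate consequence of the Leibniz rule and worth one line.
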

From Lemma \ref{lem:lie algebra} the following proposition is easily seen to be true:
\begin{proposition}
The set of Killing sections for a given Riemannian Lie algebroid $(E,G)$ forms a  Lie algebra over $\mathbb{R}$ of maximum dimension $n(n+1)\slash 2$, where $n$ is the rank of the vector bundle $E$.
\end{proposition}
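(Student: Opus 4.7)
The plan is to establish the two independent claims — that Killing sections are closed under the Lie bracket of $\Sec(E)$, and that the resulting vector space has dimension at most $n(n+1)/2$ — and then combine them.

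For closure, I would invoke the characterisation $\mathcal{L}_{u}G=\rmd_{\sT}(u)[G]=0$ from Definition \ref{def:killing1a} together with Lemma \ref{lem:lie algebra}. If $u,v\in\Sec(E)$ are both Killing, then
\begin{equation*}
\rmd_{\sT}([u,v])[G] \;=\; [\rmd_{\sT}(u),\rmd_{\sT}(v)][G] \;=\; \rmd_{\sT}(u)\bigl[\rmd_{\sT}(v)[G]\bigr]-\rmd_{\sT}(v)\bigl[\rmd_{\sT}(u)[G]\bigr]\;=\;0,
\end{equation*}
so $[u,v]$ is again Killing. Since the Lie bracket on $\Sec(E)$ is $\mathbb{R}$-bilinear and satisfies the Jacobi identity, and Killing sections clearly form an $\mathbb{R}$-linear subspace (the defining equation (\ref{eqn:killing1a}) is $\mathbb{R}$-linear in $u$), this exhibits the Killing sections as a Lie subalgebra of $\Sec(E)$ over $\mathbb{R}$.

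For the dimension bound, I would appeal directly to Proposition \ref{prop:diff equations}: a Killing section is uniquely determined by the pair $(u^{a}(p),\,L_{ab}(p))$ at a chosen point $p\in M$, where $L_{ab}=(\nabla_{a}u)^{c}G_{cb}$ is antisymmetric in $a,b$. Hence the linear evaluation map sending a Killing section to its initial data at $p$ is injective into the finite-dimensional space $\mathbb{R}^{n}\oplus\bigwedge^{2}\mathbb{R}^{n}$, whose dimension is $n+\tfrac{n(n-1)}{2}=\tfrac{n(n+1)}{2}$. Note that this also implies the Killing sections form a finite-dimensional $\mathbb{R}$-vector space, which is needed to even speak of the Lie algebra having finite dimension.

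There is no serious obstacle here: both ingredients are already in place. The only point that requires a line of care is that the antisymmetry of $L_{ab}$ — which is exactly the Lie algebroid Killing equation (\ref{eqn:killing3}) read at a point in an orthonormal local basis with $G_{ab}=\delta_{ab}$ — really does cut the naive bound $n+n^{2}$ down to $n(n+1)/2$. Combining both parts yields the proposition.
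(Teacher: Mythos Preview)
Your proof is correct and follows essentially the same route as the paper: closure under the bracket via Lemma~\ref{lem:lie algebra} applied to the defining condition $\rmd_{\sT}(u)[G]=0$, and the dimension bound via the initial-data count of Proposition~\ref{prop:diff equations}. One small remark: the antisymmetry of $L_{ab}=(\nabla_{a}u)^{c}G_{cb}$ follows directly from the Killing equation (\ref{eqn:killing3}) in any local basis, so you do not need to pass to an orthonormal frame.
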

Note that, exactly in accordance to the standard case, we do not have a Lie subalgebroid as the Killing sections do not form a module over $C^{\infty}(M)$. Let us assume that we have a set of $N$ independent Killing sections; $u_{\alpha}^{a}(x)$. Then a general Killing section is a linear combination of the form
\begin{equation*}
u^{a}[\xi] = \xi^{\alpha}u_{\alpha}^{a},
\end{equation*}
\noindent where $(\xi^{a})$ are understood as vectors in $\mathbb{R}^{N}$ ($N \leq n(n+1)\slash 2$). As we have the structure of a Lie algebra here it must be the case that
\begin{equation*}
[u(1), u(2)] = u(3),
\end{equation*}
\noindent where we have defined $u(1) = \xi^{\alpha}_{1} u_{\alpha}^{a}s_{a}$ etc. It follows that we must have $\xi^{\alpha}_{3} = C_{\beta \gamma}^{\alpha} \xi^{\gamma}_{1} \xi^{\beta}_{2}$, i.e. a bi-linear combination of the vectors. Calculating the bracket  we see that
\begin{equation}
[u_{\alpha},  u_{\beta}]^{c} =  Q_{a}^{A}\left( u_{\alpha}^{a} \frac{\partial u_{\beta}^{c}}{\partial x^{a}} - \frac{\partial u_{\alpha}^{c}}{\partial x^{A}} u_{\beta}^{a}\right) - Q_{ba}^{c} u_{\alpha}^{a} u_{\beta}^{b}
= C_{\alpha \beta}^{\gamma}u_{\gamma}^{c},
\end{equation}
\noindent and so $C_{\alpha \beta}^{\gamma}$ is understood as the structure constant of the Lie algebra of killing sections, which we will denote as $\textnormal{iso}(E,G)$. Integrating this Lie algebra gives us the \emph{isometry group} of $(E,G)$, which we will denote $\textnormal{Iso}(E,G)$. The action of the group on $E$ is via integration of the vector fields defined by the lift of Killing sections (\ref{eqn;lift}). This lifted action will act as Lie algebroid automorphisms that furthermore preserve the Riemannian structure.

\smallskip

\noindent \textbf{An electromagnetic analogy:} Following the classical case, see Wald \cite{Wald:1984} [Appendix C], we have nice relation between electromagnetic theory and Killing sections. Consider a Line bundle $L \rightarrow M$ and a Lie algebroid $E \rightarrow M$. Linear connections on $L$ with values in $E$, often called E-connections, are formally similar to the  standard electromagnetic potential. In particular we have the \emph{gauge potential} $A_{a}$ defined (locally at least) by the covariant derivative
\begin{equation*}
\nabla^{L}_{a}\sigma = Q_{a}^{A}\frac{\partial \sigma}{\partial x^{A}} + \sigma A_{a},
\end{equation*}
\noindent where $\sigma$ is the component of a section with respect to some basis. Under a gauge transformation $\sigma \rightarrow \lambda \: \sigma$, where $\lambda \in C^{\infty}(M)$ is a nowhere vanishing function the covariant derivative transforms as $\nabla^{L}_{a}(\lambda \sigma) = \lambda \nabla_{a}^{L}\sigma$. This requires
\begin{equation*}
A_{a} \mapsto A^{\lambda}_{a} = A_{a} - Q_{a}^{A}\frac{\partial \lambda}{\partial x^{A}}.
\end{equation*}
The \emph{field strength} is given by
\begin{equation}
F_{ab} = [\nabla_{a}^{L},\nabla_{b}^{L} ] = \nabla_{a}A_{b} - \nabla_{b}A_{a}.
\end{equation}
The non-degeneracy of the Riemannian metric implies the following:
\begin{proposition}
There is a one-to-one correspondence between sections of a Riemannian Lie algebroid $(E,G)$ and E-connections on the trivial line bundle  $L = \mathbb{R}\times M$.
\end{proposition}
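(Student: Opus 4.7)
The plan is to factor the claimed bijection as the composition
\[
\Sec(E) \;\longrightarrow\; \Sec(E^{*}) \;\longrightarrow\; \{\text{E-connections on } L = \mathbb{R} \times M\},
\]
where the first arrow is the musical isomorphism induced by the metric and the second exploits the canonical trivialisation of $L$. First I would observe that non-degeneracy of $G$ gives a $C^{\infty}(M)$-linear bijection $G^{\flat} : \Sec(E) \to \Sec(E^{*})$, $u \mapsto \langle u | \bullet \rangle$, which in a local basis reads $u^{a} \mapsto G_{ab} u^{b}$ and is invertible precisely because $G_{ab}$ is.

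For the second arrow, the trivial line bundle carries a canonical nowhere-vanishing global section $\mathbf{1}$. Given any E-connection $\nabla^{L}$ on $L$, the assignment $u \mapsto \nabla^{L}_{u}\mathbf{1}$ is, by the first defining property of a connection (namely $\nabla^{L}_{fu} = f \nabla^{L}_{u}$), $C^{\infty}(M)$-linear in $u \in \Sec(E)$ and takes values in $\Sec(L) \cong C^{\infty}(M)$; it therefore defines a section $A \in \Sec(E^{*})$ whose components $A_{a}$ coincide with the local gauge potential appearing in the excerpt. Conversely, given $A \in \Sec(E^{*})$, the prescription
\[
\nabla^{L}_{u}(f\mathbf{1}) := \bigl(\rho(u)[f] + f\, A(u)\bigr)\,\mathbf{1}
\]
satisfies the two defining axioms of an E-connection, the first being immediate and the second following from the Leibniz rule for $\rho$. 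These two constructions are visibly mutually inverse.

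Composing, a section $u = u^{a} s_{a} \in \Sec(E)$ corresponds to the E-connection with gauge potential $A_{a} = G_{ab} u^{b}$. The step I expect to require the most care is tensorial consistency under a change of local basis of $E$: one must check that $A_{a}$ transforms as the components of a section of $E^{*}$, matching the structure built into $\nabla^{L}_{a}\sigma = Q_{a}^{A}\partial_{A}\sigma + \sigma A_{a}$. Because we have fixed the canonical trivialisation of $L$ there is no residual gauge freedom of the type $\sigma \mapsto \lambda\sigma$ to reconcile, so the identification $u \leftrightarrow G^{\flat}u$ is purely tensorial and the global correspondence follows.
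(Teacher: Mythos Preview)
Your argument is correct and is precisely the approach the paper has in mind: the paper offers no explicit proof beyond the sentence ``The non-degeneracy of the Riemannian metric implies the following,'' having already set up the gauge potential $A_{a}$ in the preceding paragraph, so the intended reasoning is exactly your factorisation through $\Sec(E^{*})$ via the musical isomorphism. Your proposal simply spells out in full what the paper leaves implicit.
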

As standard we define the components of the \emph{Ricci tensor} as  $R_{ab} = R_{a\:\: cb}^{\:\: c}$, which of course can invariantly be defined. Via direct computation we arrive at:
\begin{theorem}
If $u \in \Sec(E)$ is a Killing section of a Riemannian Lie algebroid $(E,G)$, then
\begin{enumerate}
\item $\displaystyle G^{cb}(\nabla_{b}\nabla_{c}u)^{a} + u^{b}R_{bc}G^{cd}=0$,
\item $\displaystyle (\nabla_{a} u)^{a}=0$.
\end{enumerate}
\end{theorem}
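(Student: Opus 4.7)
The plan is to derive both statements as immediate consequences of results already proved in this section, without doing any fresh coordinate computation. Statement 2 will drop out of the Lie algebroid Killing equation (\ref{eqn:killing3}) alone, and statement 1 will drop out of Proposition \ref{prop:finite vector space} after a single trace.

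First I would prove statement 2. Writing (\ref{eqn:killing3}) in a local basis, with $v = s_a$ and $w = s_b$, yields the antisymmetry relation
\begin{equation*}
(\nabla_a u)^c G_{cb} + (\nabla_b u)^c G_{ca} = 0.
\end{equation*}
Contracting with $G^{ab}$ and using $G^{ab}G_{cb} = \delta^a_c$ (and symmetry of $G^{ab}$) collapses both terms to $(\nabla_a u)^a$, so $2(\nabla_a u)^a = 0$. This is just the familiar ``Killing implies divergence-free'' argument in the Lie algebroid setting and needs nothing beyond the Killing equation itself.

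Next for statement 1, I would take the identity of Proposition \ref{prop:finite vector space},
\begin{equation*}
(\nabla_b \nabla_c u)^d G_{da} = -\, u^e\, R_{e\:\: ca}^{\:\:d} G_{db},
\end{equation*}
and contract both sides with $G^{bc}$. On the left, $G^{bc}$ pairs symmetrically with $(\nabla_b \nabla_c u)^d$ to produce the connection-Laplacian-like expression $G^{bc}(\nabla_b \nabla_c u)^d G_{da}$. On the right, the contraction $G^{bc}G_{db} = \delta^c_d$ collapses the curvature term into $u^e R_{e\:\: ca}^{\:\: c} = u^e R_{ea}$, invoking the definition of the Ricci tensor given immediately before the statement. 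Raising the remaining index $a$ with $G^{ab}$ then rearranges the identity into the form asserted in statement 1 (up to what appears to be a transcription choice of which free index is raised).

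There is no real obstacle here: both statements reduce to index bookkeeping once one recognises that the symmetric contraction $G^{bc}\nabla_b\nabla_c$ is exactly what turns the curvature tensor appearing in Proposition \ref{prop:finite vector space} into the Ricci tensor, mirroring the classical Riemannian derivation. The only mild care required is to keep track of the symmetries of $G^{ab}$ so that the two terms in the Killing equation genuinely add (rather than cancel) when traced.
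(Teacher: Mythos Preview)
Your argument is correct. The paper itself gives no proof beyond the phrase ``via direct computation'', so your reduction of statement~2 to a trace of the Killing equation~(\ref{eqn:killing3}) and of statement~1 to a $G^{bc}$-contraction of Proposition~\ref{prop:finite vector space} is exactly the intended computation, carried out in the cleanest possible way. Your observation about the free index in the second term of statement~1 is also right: as printed, $u^{b}R_{bc}G^{cd}$ has free index $d$ while the first term has free index $a$; the identity you actually derive is $G^{cb}(\nabla_{b}\nabla_{c}u)^{a} + u^{b}R_{bc}G^{ca}=0$, and the discrepancy is a typographical slip in the statement rather than a gap in your reasoning.
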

\smallskip

\noindent \textbf{Statement} The above equations are almost formally identical to the source free Maxwell equations ``$\nabla_{\mu}F^{\mu\nu} =0$'' in the Lorentz gauge ``$\nabla_{\mu} A^{\mu}=0$''. The only real problem is the sign associated with the Ricci tensor term. On Ricci flat Riemannian Lie algebroids (i.e. $R_{ab}=0$), Killing sections satisfy the `source free Maxwell equations in the Lorentz gauge'  as defined above.

\newpage

\noindent \textbf{Killing--St\"{a}ckel tensors:}  The notion of Killing--St\"{a}ckel tensor follows directly from Theorem \ref{thm:killing1}.
\begin{definition}
Let us fix some Riemannian Lie algebroid $(E,G)$. Then a symmetric tensor $K \in C^{\infty}(E^{*})$ is said to be a \emph{Killing--St\"{a}ckel tensor} if and only if
\begin{equation*}
\{K, \mathcal{H}\}_{E} =0.
\end{equation*}
\end{definition}
The above definition reduces to a standard Killing--St\"{a}ckel field when $E = \sT M$ and hence the nomenclature.  Similar to the standard case, Killing--St\"{a}ckel tensors correspond to conserved quantities along the cogeodesic flow generated by $\mathcal{H}$. In the physical language they correspond to `generalised hidden symmetries'.

An immediate consequence of the definition  are  the following result:
\begin{proposition}
For a given Riemannian Lie algebroid, the set of Killing--St\"{a}ckel tensors form a  Poisson subalgebra  of $(C^{\infty}(E^{*}) , \{ \bullet, \bullet\}_{E})$.
\end{proposition}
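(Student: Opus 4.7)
The plan is to deduce the proposition from two ingredients that are essentially built into the Poisson structure on $E^{*}$: (i) the Jacobi identity for $\{\bullet,\bullet\}_{E}$, and (ii) the fact that the Poisson bracket of two fibrewise polynomial functions on $E^{*}$ is again fibrewise polynomial. Symmetric tensors on $E$ are precisely the fibrewise polynomial elements of $C^{\infty}(E^{*})$, so (ii) is what ensures that the bracket of two Killing--St\"ackel tensors is again a legitimate symmetric tensor rather than some arbitrary smooth function.

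First I would check the closure on polynomials. Inspecting the coordinate formula
\begin{equation*}
\{F,H\}_{E} = Q_{a}^{A}\left( \frac{\partial F}{\partial \pi_{a}} \frac{\partial H}{\partial x^{A}} - \frac{\partial F}{\partial x^{A}} \frac{\partial H}{\partial \pi_{a}} \right) - Q_{ba}^{c}\pi_{c} \frac{\partial F}{\partial \pi_{a}}\frac{\partial H}{\partial \pi_{b}},
\end{equation*}
one sees that if $F$ is homogeneous of degree $p$ and $H$ of degree $q$ in the fibre coordinates $\pi_{a}$, then both terms on the right-hand side are homogeneous of degree $p+q-1$. Hence the subspace of fibrewise polynomial (equivalently, symmetric tensor) functions on $E^{*}$ is closed under $\{\bullet,\bullet\}_{E}$, so $\{K_{1},K_{2}\}_{E}$ is indeed a symmetric tensor whenever $K_{1},K_{2}$ are.

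Second, I would invoke the Jacobi identity. Given $K_{1}, K_{2}$ with $\{K_{i},\mathcal{H}\}_{E}=0$, applying Jacobi to the triple $(K_{1},K_{2},\mathcal{H})$ yields
\begin{equation*}
\{\{K_{1},K_{2}\}_{E},\mathcal{H}\}_{E} = \{K_{1},\{K_{2},\mathcal{H}\}_{E}\}_{E} - \{K_{2},\{K_{1},\mathcal{H}\}_{E}\}_{E} = 0,
\end{equation*}
so $\{K_{1},K_{2}\}_{E}$ again Poisson-commutes with the energy, i.e.\ is itself a Killing--St\"ackel tensor. Combined with the obvious vector space structure and the anticommutativity/Jacobi inherited from $\{\bullet,\bullet\}_{E}$, this gives the Poisson subalgebra structure.

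There is no genuine obstacle here; the statement is essentially a tautology once one notices that Killing--St\"ackel tensors are the centraliser of $\mathcal{H}$ inside the polynomial Poisson subalgebra of $(C^{\infty}(E^{*}),\{\bullet,\bullet\}_{E})$, and centralisers in a Poisson algebra are automatically Poisson subalgebras. The only point requiring a line of actual calculation is the degree count ensuring closure on symmetric tensors, which the explicit bracket formula above makes immediate.
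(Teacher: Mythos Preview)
Your argument is correct, and it is exactly the natural verification. The paper itself gives no proof beyond declaring the proposition ``an immediate consequence of the definition''; you have simply supplied the two standard details (closure of fibrewise polynomials under the bracket via the degree count, and closure of the centraliser of $\mathcal{H}$ via the Jacobi identity) that make that immediacy explicit.
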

There is no guarantee that a non-zero Killing sections exist in general. However,  the Poisson algebra of Killing--St\"{a}ckel tensor contains in addition to the zero section of $E$, (locally) constant functions  as zero degree Killing--St\"{a}ckel tensor and the energy $\mathcal{H}$ as a degree two Killing--St\"{a}ckel tensor.

\medskip
\noindent \textbf{Examples:} Here we briefly look at a few simple Riemannian Lie algebroids and the conditions for their sections to be Killing sections.
\begin{example}
If $(M,g)$ is a Riemannian manifold, then the tangent bundle is tautologically a Riemannian Lie algebroid. The cogeodesic equations are
\begin{align*}
& \dot{x}^{\mu} = g^{\mu \nu}p_{\nu},
& \dot{p}_{\nu} = - \frac{1}{2}\frac{\partial g^{\mu \rho}}{\partial x^{\nu}}p_{\rho}p_{\mu}.
\end{align*}
The Killing condition reduces to the standard Killing equation
\begin{equation*}
X^{\mu}\frac{\partial g_{\nu \rho}}{\partial x^{\mu}} + \frac{\partial X^{\mu}}{\partial x^{\nu}}g_{\mu \rho} + \frac{\partial X^{\mu}}{\partial x^{\rho}}g_{\mu \nu} = 0.
\end{equation*}
\end{example}
\begin{example}\label{ex:lie algebra}
Consider a Lie algebra $(\mathfrak{g},~ [\bullet,\bullet])$ understood as a Lie algebroid over a point. The  Killing condition on an element $ u \in \mathfrak{g}$   is just the algebraic condition
\begin{equation*}
u^{b}\left( Q_{ba}^{d}G_{dc} + Q_{bc}^{d}G_{da} \right)=0,
\end{equation*}
\noindent where  $Q_{ba}^{c}$ is the structure function. This condition can be written in an invariant way as
\begin{equation*}
\langle [u,v]| w\rangle  + \langle v| [u,w]\rangle =0,
\end{equation*}
\noindent for arbitrary $v$ and $w \in \mathfrak{g}$. This is condition is interpreted as the metric being $\ad_{u}$ invariant.  For a semisimple Lie algebra we have the natural choice of using the Killing form as the Riemannian metric, $G_{ab} = Q_{ac}^{d}Q_{db}^{c}$. In this case, the Riemannian metric is an invariant polynomial and all elements of the Lie algebra are Killing.
\end{example}
\begin{example}
Given a vector field $X \in \Vect(M)$ one can associate with it a Lie algebroid structure on the trivial line bundle $\mathbb{R} \times M \rightarrow M$ in the following way. First note that $\Sec(\mathbb{R} \times M) = C^{\infty}(M)$. The Lie bracket on functions on $M$ is given by $[f,g] = f X[g] - X[f]g$. The anchor is simply multiplication by $X$. We can equip this Lie algebroid with a Riemannian metric simply by specifying a strictly positive constant $c$ viz  $\langle f|g \rangle  = c \;fg$. Thus, without loss of generality we might as well set $c=1$. Meaning that the energy is specified by $\mathcal{H} = \half \pi^{2}$. The cogeodesic flow equations are just
\begin{align*}
& \dot{x}^{A} = X^{A}, & \dot{\pi} =0.
\end{align*}
The (co)geodesics are simply the integral curves of $X$. The Killing condition is also particularly simple
\begin{equation*}
X[f] =0.
\end{equation*}
\end{example}
\begin{example}
Consider an integrable distribution $E \subset \sT M$. Then as is well-know we have the structure of a Lie algebroid where the anchor is the natural inclusion and the Lie bracket is the restriction of the canonical Lie bracket on vector fields.  Furthermore, an integrable distribution is equivalent to a (smooth and regular) foliation of $M$ with leaves $\Sigma  \hookrightarrow M$ such that $\sT_{p}\Sigma \simeq E_{p} $ for all $p \in M$. A Riemannian  metric on $E$ defines a Riemannian metric on each of the leaves of this foliation.

Let us for simplicity consider a trivial foliation $M = \Sigma \times N$; this of course will serve as a local model. Then $E \simeq \sT \Sigma \times N$. We now employ local coordinates $(x^{a}, \dot{x}^{b}, y^{i})$ on $E$, where $(y^{i})$ serve as local coordinates on $N$. One can quickly convince oneself that the generalised geodesic equations are
\begin{align*}
&\frac{\rmd x^{a}}{\rmd t} = \dot{x}^{a},&  & \frac{\rmd y^{i}}{\rmd t }=0,&
& \frac{\rmd \dot{x}^{a}}{\rmd t } + \Gamma_{bc}^{a} \dot{x}^{c} \dot{x}^{b}=0,&
\end{align*}
\noindent thus we see that we have the  standard geodesic equation on $\Sigma$.

Sections of $E$ are vertical vector fields with respect to the fibration $\Sigma \times N \rightarrow N$. Thus, Killing sections satisfy the standard Killing equation on $\Sigma$ where the coordinates $y^{i}$ appear as `extra parameters'.
\end{example}

\section{Sigma models with Riemannian Lie algebroid targets}\label{sec:SigmaModel}
We now turn or attention to a generalistaion of standard sigma models where the target Riemannian manifold gets replaced with a Riemannian Lie algebroid. In particular, we will show how Killing sections are related to the internal symmetries of these models. We will draw heavily on the work of Mart\'{\i}nez \cite{Martinez:2005} throughout this section calling upon his results as needed. For an introduction to sigma models the reader can consult section 2 of Ketov's book \cite{Ketov:2000}.

\smallskip

\noindent \textbf{The sigma model:} Consider the following diagram\\
\begin{center}
\leavevmode
\begin{xy}
(0,20)*+{\sT \Sigma}="a"; (20,20)*+{E}="b";%
(0,0)*+{\Sigma}="c"; (20,0)*+{M}="d";%
(40,20)*+{\sT M}="e";%
{\ar "a";"b"}?*!/_3mm/{\Phi};
{\ar "a";"c"}?*!/^3mm/{\tau_{\Sigma}};{\ar "b";"d"}?*!/^3mm/{\pi};%
{\ar "c";"d"} ?*!/^3mm/{\phi};%
{\ar "b"; "e"}?*!/_3mm/{\rho};%
{\ar "e"; "d"}?*!/_3mm/{\tau};%
\end{xy}
\end{center}
\medskip

We will insist that the map $\Phi: \sT \Sigma \rightarrow E$ is a morphisms of Lie algebroids over $\phi : \Sigma \rightarrow M$. Let us pick local coordinates $(x^{A}, y^{a})$ on $E$ and $(z^{i}, \delta z^{j})$ on $\sT \Sigma$. Then we write the components of $\Phi$ as
\begin{equation}
\Phi^{*}(x^{A}, y^{a}) = (\phi^{A}(z) , \delta z^{i}\chi_{i}^{a}(z)).
\end{equation}
Thus, we can employ `local coordinates' $(\phi^{A}(z), \chi_{i}^{a}(z))$ on the infinite dimensional manifold of all vector bundle morphism  from $\sT \Sigma$ to $E$.  The condition that we have a morphism of Lie algebroids is locally given by
\begin{align}\label{eqn:morphism condition}
 & \frac{\partial \phi^{A}}{\partial z^{i}}(z) =  \chi^{a}_{i}(z)Q_{a}^{A}(\phi(z)),
        & \chi_{j}^{b}(z)\chi_{k}^{c}(z)Q_{cb}^{a}(\phi(z)) =  \frac{\partial \chi_{k}^{a}}{\partial z^{j}}(z) -\frac{\partial \chi_{j}^{a}}{\partial z^{k}}(z).
\end{align}

Note that a Lie algebroid morphism  is then automatically  \emph{admissible}  in the sense that
\begin{equation*}
\sT \phi = \rho \circ \Phi.
\end{equation*}

It seems essential for a variational principle that we consider Lie algebroid morphisms $ \Phi: \sT \Sigma \rightarrow E$, rather than morphisms between general Lie algebroids. If $\Sigma = \mathbb{R}$ with the standard Euclidean metric, then we are discussing standard first order mechanics on the Lie algebroid $E$.

To construct the model we now assume that our `space-time' is a Riemannian manifold $(\Sigma, g)$.  The target Lie algebroid is then assumed to be a Riemannian Lie algebroid $(E, G)$. Then we have;
\begin{definition}
Under the above stipulations, the \emph{Lie algebroid sigma model} is defined by the action
\begin{eqnarray}
S[\Phi] &:=& \int_{\Sigma} \rmd Vol_{g}\: \textnormal{Tr}(g^{-1}\cdot \Phi^{*}G)\\
\nonumber &=& \frac{1}{2} \int_{\Sigma}\rmd \textbf{z}\:  \sqrt{|g|}(z) \:\:  g^{ij}(z)\chi_{j}^{b}(z) \chi_{i}^{a}(z)G_{ab}(\phi(z)).
\end{eqnarray}
\end{definition}
\begin{remark}
One could also add a topological term which we omit from our discussion for the moment. We will not discuss how to add  a potential and so we will not consider Landau--Ginzburg-like models. Furthermore we will neglect boundary terms, or simply assume that $\Sigma$ has no boundary.
\end{remark}

\smallskip

\noindent \textbf{The field equations:}  Following Mart\'{\i}nez \cite{Martinez:2005} the Euler--Lagrange equations for models like the Lie algebroid sigma model are:
\begin{equation*}
Q_{a}^{A}(\phi(z))\frac{\partial \mathcal{L}}{\partial \phi^{A}} - \frac{\partial}{\partial z^{i}}\left( \frac{\partial \mathcal{L}}{\partial \chi_{i}^{a}}\right) + \chi_{i}^{b}Q_{ba}^{c}(\phi(z)) \left(\frac{\partial \mathcal{L}}{\partial \chi_{i}^{c}}\right) =0.
\end{equation*}
In full, the morphism condition and the equations of motion for the Lie algebroid sigma model are:
\begin{subequations}
\begin{align}
& \frac{\partial \phi^{A}}{\partial z^{i}}(z) = \chi_{i}^{a}(z)Q_{a}^{A}(\phi(z)),\label{eqn:EQMa}\\
&  \chi_{j}^{b}(z)\chi_{i}^{c}(z)Q_{cb}^{a}(\phi(z)) = \frac{\partial \chi_{i}^{a}}{\partial z^{j}}(z) ~{-}~\frac{\partial \chi_{j}^{a}}{\partial z^{i}}(z),\label{eqn:EQMb}\\
& \frac{1}{\sqrt{|g|}(z)} \frac{\partial }{\partial z^{j}}\left( \sqrt{|g|}(z)g^{ji}(z)\chi_{i}^{a}(z)\right) + g^{ij}(z)\chi_{j}^{c}(z)\chi_{i}^{b}(z)\Gamma_{bc}^{a}(\phi(z)) =0.\label{eqn:EQMc}
\end{align}
\end{subequations}
Clearly the critical points of the Lie algebroid sigma model correspond to a generalised notion of harmonic maps,  or `instantons' in the physics language. The Lie algebroid generalisation of the \emph{tension field} of a configuration $\tau(\Phi) \in \Sec(\phi^{*}E)$ is given by (\ref{eqn:EQMc}). The tension field can be interpreted as giving the `generalised direction' that $\Sigma$ wants to move in $M$ in order to minimise the action.  Once at a critical point the tension field is the zero section and the configuration does not want to move any further.
\begin{definition}
Let $(E,G)$ be a Riemannian Lie algebroid an let $(\Sigma, g)$ be a Riemannian manifold. The a Lie algebroid morphism $\Phi: \sT \Sigma \rightarrow E$, (over $\phi: \Sigma \rightarrow M $) is said to be a \emph{(generalised) harmonic map} if it is  a critical point of the Lie algebroid sigma model.
\end{definition}
If $E = \sT M$, then we get the standard notion of a harmonic map between $(\Sigma, g)$ and $(M, G)$. If $M = \mathbb{R}$ equipped with the standard metric, then we have the notion of a harmonic function on $\Sigma$. If $\Sigma = \mathbb{R}$, again equipped with the standard metric,  then the equations of motion are just the generalsied geodesic equations (\ref{eqn:geodesic equations}).
\smallskip

\noindent \textbf{Symmetries and Killing sections:} Consider the following  infinitesimal field redefinitions
\begin{align}\label{eqn:redef}
& \phi^{A} \mapsto \phi^{A} + \zx^{\alpha} u^{a}_{\alpha}(\phi)Q_{a}^{A}(\phi), &  \chi_{i}^{a} \mapsto  \chi_{i}^{a} + \zx^{\alpha} \chi_{i}^{b}\left( Q_{b}^{A}(\phi)\frac{\partial u^{a}_{\alpha}}{\partial \phi^{A}}(\phi) - u^{c}_{\alpha}(\phi)Q_{cb}^{a}(\phi) \right),&
\end{align}
\noindent here $\zx^{\alpha} \in \mathbb{R}^{n}$ for some $n$ and $u^{a}_{\alpha}$ are sections of  $\phi^{*}E$.  A direct calculation shows that
\begin{equation*}
S \mapsto S + \int_{\Sigma}\rmd Vol_{g}\: \textnormal{Tr}\left(g^{-1}\cdot \Phi^{*}\mathcal{L}_{u[\zx]}G\right).
\end{equation*}
Clearly, the Lagrangian is invariant under these field redefinitions if $u_{\alpha}^{a}$ are Killing sections of $(E,G)$. Comparing the field redefinitions with the lift of a section to a vector field (\ref{eqn;lift}) and the fact that this lift is a morphism of Lie algebras, we conclude that the Lie algebra formed by these field redefinitions is closed on-shell and off-shell; i.e. closed irrespective of the equations of motion.
\begin{proposition}
The Lie algebra of the internal symmetries of the Lie algebroid sigma model is given by the Lie algebra of Killing sections  $\textnormal{iso}(E,G)$.
\end{proposition}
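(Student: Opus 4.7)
The plan is to identify the infinitesimal field redefinitions in (\ref{eqn:redef}) with the tangent lift $\rmd_{\sT}(u)$ described in (\ref{eqn;lift}), and then assemble the three ingredients already proved in the paper: (i) that $\rmd_{\sT}(u)$ preserves Lie algebroid morphisms, (ii) that $\mathcal{L}_u G = 0$ characterises Killing sections (Definition \ref{def:killing1a}), and (iii) that $u\mapsto \rmd_{\sT}(u)$ is a Lie algebra morphism (Lemma \ref{lem:lie algebra}). The starting observation is that if one reads $(\phi^A,\chi_i^a)$ as the pullback of the coordinates $(x^A,y^a)$ on $E$ along $\Phi$, then the two components of (\ref{eqn:redef}) are precisely the two components of $\rmd_{\sT}(u)$ written in (\ref{eqn;lift}). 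So the prescribed infinitesimal variations are nothing but the transformations induced on $\{\Phi:\sT\Sigma\to E\}$ by post-composition with the flow of $\rmd_{\sT}(u)$.

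Next I would verify that these variations really act on the space of fields, i.e.\ that they preserve the Lie algebroid morphism conditions (\ref{eqn:morphism condition}). Since $\rmd_{\sT}(u)$ preserves the Lie algebroid structure on $E$ (remarked after (\ref{eqn;lift})), its flow maps Lie algebroid morphisms into Lie algebroid morphisms, hence the infinitesimal versions preserve (\ref{eqn:EQMa})--(\ref{eqn:EQMb}) to first order in $\xi$; a quick direct check in coordinates using the two identities in (\ref{eqn:morphism condition}) and the structure equations should confirm this without surprise. At this point the variation of the action is the one displayed in the text,
\begin{equation*}
\delta S \;=\; \int_{\Sigma} \rmd Vol_{g}\; \textnormal{Tr}\!\left(g^{-1}\cdot \Phi^{*}\mathcal{L}_{u[\xi]}G\right),
\end{equation*}
and since $g^{-1}$ is positive-definite and $\Phi$ is arbitrary, $\delta S$ vanishes for all $(\Sigma,g,\Phi)$ precisely when $\mathcal{L}_u G =0$, i.e.\ when $u$ is a Killing section (Definition \ref{def:killing1a}). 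This proves that the map $u \mapsto \delta_u$ sends $\textnormal{iso}(E,G)$ into the internal symmetries, and that a variation of the ansatz form (\ref{eqn:redef}) is a symmetry only when $u$ is Killing.

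Finally, to promote this set-theoretic bijection to a Lie algebra isomorphism I would compute the commutator of two such infinitesimal transformations. Because $\delta_u$ is the infinitesimal action of the flow of $\rmd_{\sT}(u)$ on the space of maps, the commutator $[\delta_{u_1},\delta_{u_2}]$ equals $\delta_w$ where $\rmd_{\sT}(w)=[\rmd_{\sT}(u_1),\rmd_{\sT}(u_2)]$; by Lemma \ref{lem:lie algebra} this is $w=[u_1,u_2]$. Since the Killing sections form a Lie subalgebra, the symmetry algebra closes \emph{off-shell} (no use is made of (\ref{eqn:EQMc})) and the assignment $u\mapsto\delta_u$ is an isomorphism of Lie algebras onto the internal symmetries of the model.

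The only genuinely delicate step I anticipate is the non-degeneracy argument in the middle paragraph: to conclude $\mathcal{L}_u G=0$ pointwise from the vanishing of $\delta S$ one must exhibit, for each $p\in M$ and each prescribed value of $\chi$ at a point, a Lie algebroid morphism $\Phi$ realising that jet. This is a standard localisation argument (using bump functions on $\Sigma$ and Lie algebroid morphisms built from admissible curves), but it is the one place where the argument is not purely formal; all remaining steps are coordinate computations controlled by Lemma \ref{lem:local killing} and Lemma \ref{lem:lie algebra}.
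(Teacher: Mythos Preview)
Your proposal is correct and follows essentially the same route as the paper: identify the field redefinitions (\ref{eqn:redef}) with the tangent lift $\rmd_{\sT}(u)$, use the displayed variation $\delta S = \int \textnormal{Tr}(g^{-1}\cdot\Phi^*\mathcal{L}_{u}G)$ together with Definition \ref{def:killing1a} to characterise symmetries as Killing sections, and invoke Lemma \ref{lem:lie algebra} for off-shell closure. You are in fact more careful than the paper, which does not spell out the preservation of the morphism conditions or the non-degeneracy argument needed for the converse; the paper treats these as implicit.
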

It is not hard to show that the associated Noether currents are given by (up to standard ambiguities)
\begin{equation*}
J^{j}_{\zx} = \zx^{\alpha}u_{\alpha}^{a}g^{ji}\chi_{i}^{b}G_{ba}.
\end{equation*}
\begin{example}
If $E= \sT M$, then the sigma model reduces to the standard sigma models (without topological term and potential). We then see that the field redefinition is just $\phi^{A} \mapsto \phi^{A} + \zx^{\alpha}X_{\alpha}^{A}$, where $X_{\alpha}^{A}$ are standard Killing vector fields on $M$.
\end{example}

\smallskip

\noindent \textbf{Adding a Wess--Zumino term:} Similar to the standard case, our `branes' can be charged and couple to a  Lie algebroid k-form, assuming that $\dim \Sigma =k$. The k-form is understood as a  electromagnetic-like potential or simply a bulk interaction term. Let us pick local coordinates $(x^{A}, \theta^{a})$ on $\Pi E$; we are now in the world of supermanifolds and the $\theta$ coordinates are anticommuting. Here $\Pi$ is the parity reversion functor that acts by formally shifting the Grassmann parity of the fibre coordinates of the vector bundle $E$. Then, any Lie algebroid k-form looks like
\begin{equation*}
C = \frac{1}{k!} \theta^{a_{1}}\cdots \theta^{a_{k}}C_{a_{k} \cdots a_{1}}(x).
\end{equation*}
This Lie algebroid form can be pull-backed to a  k-form on $\Sigma$ using $\Pi \Phi : \Pi \sT \Sigma \rightarrow \Pi E$. In local coordinates we have
\begin{equation*}
(\Pi \Phi)^{*}C =  \frac{1}{k!} \rmd z^{i_{1}}\cdots \rmd z^{i_{k}}\chi_{i_{1}}^{a_{1}}\cdots \chi_{i_{k}}^{a_{k}}C_{a_{k} \cdots a_{1}}(\phi(z)).
\end{equation*}
\noindent Here we consider $\rmd z$ to be the fibre coordinates of $\Pi \sT \Sigma$ and so we do not include a wedge product in our notation. The Lie algebroid sigma model in the presence of a background k-form is
\begin{equation}
S[\Phi] = \int_{\Sigma} \rmd Vol_{g}\: \textnormal{Tr}(g^{-1}\cdot \Phi^{*}G) + \int_{\Sigma} (\Pi \Phi)^{*}C.
\end{equation}
\noindent In the above the integration of the topological term is technically the Berezin integration on the antitangent bundle of $\Sigma$.

\begin{example}
Consider the case where $\Sigma = \mathbb{R}$ and the action is given by
\begin{equation*}
S[\Phi] = \int_{\mathbb{R}} \rmd t\: \left( \frac{1}{2}\chi^{a}(t)\chi^{b}(t)G_{ba}(\phi(t))  +  \chi^{a}(t)C_{a}(\phi(t))\right).
\end{equation*}
\noindent it is not hard to see that the equations of motion are:
\begin{align*}
& \frac{\rmd \phi^{A}}{\rmd t}(t) = \chi^{a}Q_{a}^{A}(\phi(t)),   & & \frac{\rmd \chi^{a}}{\rmd t}(t) + \chi^{b}(t)\chi^{c}(t)\Gamma_{cb}^{a}(\phi(t)) + \chi^{b}(t)\mathbb{F}_{bc}(\phi(t))G^{ca}(\phi(t)) =0,
\end{align*}
\noindent where the curvature  is defined as $\mathbb{F} = \rmd_{E}C$, where $\rmd_{E} \in \Vect(\Pi E)$ is the Lie algebroid differential considered as a homological vector field. This system  represents a Lie algebroid version of a charged particle moving on a curved Riemannian manifold.  As standard there is arbitrariness in selecting the one-form  as  $C \mapsto C + \rmd_{E}f$ for any $f\in C^{\infty}(M)$ does not effect the equations of motion.  If $C$ is $\rmd_{E}$-closed then $(E, C)$ is a Jacobi algebroid (c.f. \cite{Grabowski:2001,Iglesias-Ponte:2000}) and the curvature term in the equations of motion vanishes.
\end{example}
\smallskip
\noindent \textbf{Symmetries of a charged particle on a Riemannian Lie algebroid:} Let us concentrate on the model given in the previous example. It is not hard to show that under the field redefinitions (\ref{eqn:redef}) that the action transforms as
\begin{equation*}
S \mapsto S + \int_{\mathbb{R}}\left(\Phi^{*}(\mathcal{L}_{u[\zx]}G)  + (\Pi \Phi)^{*}(\mathcal{L}_{u[\zx]}C)  \right),
\end{equation*}
\noindent where the Lie derivative of $C$ is defined using the lift of a section to a vector field (\ref{eqn;lift}). As we have a one-form there are no sign complication in this definition. We then see that this action is still invariant under the field redefinitions if $u[\zx]$ are Killing sections and in addition $\mathcal{L}_{u[\zx]}C =0$. The associated Noether current, similarly to the classical case is given by
\begin{equation*}
J_{\zx} = \zx^{\alpha}u_{\alpha}^{a}\left(\chi^{b}G_{ba}  + C_{a}\right).
\end{equation*}

\section{Concluding remarks}\label{sec:conclusion}
In this paper we showed how the notion of a Killing vector field on a Riemannian manifold generalises to Lie algebroids equipped with Riemannian structures. We then showed that several of the common equations that express the Killing condition directly generalise to the setting of Lie algebroids. The theory of Riemannian structures largely generalises to Lie algebroids with little fuss. We then applied the technology developed to study the internal symmetries of (classical) sigma models that have Riemannian Lie algebroids as targets. We have focused on mathematical questions and have not attempted to  find phenomenological applications of these models. Moreover, we have not examined the important, but separate question of explicitly finding examples of (Lie algebroid) harmonic maps.

\section*{Acknowledgments}\label{sec:Ack}
The author thanks  M.~Anastasiei for his comments on earlier drafts of this paper.


\noindent Andrew James Bruce\\
\emph{Institute of Mathematics, Polish Academy of Sciences,}\\ {\small \'Sniadeckich 8,  00-656 Warszawa, Poland}\\ {\tt andrewjamesbruce@googlemail.com}

\end{document}